\theoremstyle{definition}
\newtheorem{df}{Definition}
\theoremstyle{plain}
\newtheorem{stat}{Statement}
\newtheorem{thm}{Theorem}
\newtheorem{lm}{Lemma}
\newtheorem{cor}{Corollary}
\theoremstyle{remark}
\newtheorem{ex}{Example}
\newtheorem{rem}{Remark}
\author{F. A. Arias and M. Malakhaltsev}
\address{Universidad de los Andes, Bogotá, Colombia}
\email{fa.arias44@uniandes.edu.co; mikarm@uniandes.edu.co}
\title{A generalization of the Gauss-Bonnet and Hopf-Poincaré theorems}
\begin{document}
\maketitle

\begin{abstract}
We consider a locally trivial fiber bundle $\pi : E \to M$ over a compact oriented two-dimensional manifold $M$, and a section $s$ of this bundle defined over $M \setminus \Sigma$, where $\Sigma$ is a discrete subset of $M$.
We call the set $\Sigma$ the \emph{set of singularities of the section} $s : M \setminus \Sigma \to E$.

We assume that the behavior of the section $s$ at the singularities is controlled in the following way:
$s(M \setminus \Sigma)$ coincides with the interior part of a surface $S \subset E$ with boundary $\partial S$, and $\partial S$ is $\pi^{-1}(\Sigma)$. 
For such sections $s$ we define an index of $s$ at a point of $\Sigma$, which generalizes in the natural way the index of zero of a vector field, and then prove that the sum of this indices at the points of $\Sigma$ can be expressed as integral over $S$ of a $2$-form constructed via a connection in $E$.

Then we show that the classical Hopf-Poincaré-Gauss-Bonnet formula is a partial case of our result, and consider some other applications.
\end{abstract}

\subjclass{53C10, 55S35}

\keywords{singularity of section, index of singular point, curvature, projective bundle, $G$-structure}

\section{Introduction}
\label{sec:1}

Let $M$ be a $2$-dimensional compact oriented manifold, and $V$ be a vector field on $M$.
The classical Hopf-Poincaré theorem affirms that the sum of indices of zeros of $V$ is equal to the Euler characteristic of $M$:
\begin{equation}
\sum_{z \in Z(V)} ind_{z}(V) = \chi(M),
\label{eq:1}
\end{equation}
where $Z(V)$ is the set of zeros of $V$ and $ind_{z}(V)$ is the index of $V$ at $z$ (see for example~\cite{bott_tu1982}).
If, in addition, the manifold $M$ is endowed by a Riemannian metric $g$, then the Gauss-Bonnet theorem says that the Euler characteristic is expressed in terms of the curvature $K$ of this metric:
\begin{equation}
\chi(M) = \frac{1}{2\pi} \int_M K d\sigma,
\label{eq:2}
\end{equation}
where $d\sigma$ is the area form of $g$.
Therefore, we have that
\begin{equation}
\sum_{z \in Z(V)} ind_{z}(V) = \frac{1}{2\pi} \int_M K d\sigma,
\label{eq:3}
\end{equation}
Then this formula can be generalized for sections of a unit sphere subbundle of the tangent bundle of a Riemannian manifold, thus we arrive at the Euler class (see, e.\, g.~\cite{kobayashi_nomizu1969}, Appendix 20: Gauss-Bonnet theorem) and it also can be generalized to arbitrary sphere bundles~\cite{bott_tu1982}.

In this paper we find a generalization of this formula to arbitrary locally trivial bundles with compact fibers over two-dimensional closed manifolds.
Namely, we consider a locally trivial fiber bundle $\pi : E \to M$ over a compact oriented two-dimensional manifold $M$, and a section $s$ of this bundle defined over $M \setminus \Sigma$, where $\Sigma$ is a discrete subset of $M$.
We call the set $\Sigma$ the \emph{set of singularities of the section} $s : M \setminus \Sigma \to E$.

We assume that the behavior of the section $s$ at the singularities is controlled in the following way:
$s(M \setminus \Sigma)$ coincides with the interior part of a surface $S \subset E$ with boundary $\partial S$, and $\partial S$ is $\pi^{-1}(\Sigma)$ (see details in subsection~\ref{subsec:2_1}). 
It is worth noting that this idea was used by S.-S.Chern in~\cite{chern_1944}.  
For such sections $s$ we define an index of $s$ at a point of $\Sigma$, which generalizes in the natural way the index of zero of a vector field (subsection~\ref{subsec:2_2}), and then prove that the sum of these indices at the points of $\Sigma$ can be expressed as integral over $S$ of a $2$-form constructed via a connection in $E$.
Thus in Theorem~\ref{thm:2} we obtain a generalization of the formula~\eqref{eq:3}.

In Section~\ref{sec:4} we show that the classical fact that the index of a vector field on a closed two-dimensional Riemann manifold is the integral of the curvature of the Levi-Civita connection can be obtained from our generalized version.
Also in this section we consider the projective bundles and projective connections and find a relation between the index of a section of a projective bundle and the curvature of a projective connection in this bundle.

In Section~\ref{sec:5} our results are applied to the theory of $G$-structures with singularities.
If $\bar P \to M$ is a $\bar G$-principal bundle, which is reduced to a $G$-principal bundle over $M \setminus \Sigma$, where $\Sigma$ is a submanifold of $M$, we say that $\bar P$ is a $G$-principal bundle with singular set $\Sigma$.
If the $\bar G$-principal bundle $\bar P$ is a subbundle of the linear frame bundle $L(M)$ of a manifold $M$, we say that $\bar P$ is a $G$-structure with singularities.
Almost all singularities which appear in classical differential geometry define $G$-structures with singularities.
Some examples are: a vector field on a Riemannian $n$-dimensional manifold defines $SO(n-1)$-structure with singularities, a metric on an $n$-dimensional manifold $M$ which degenerates along a submanifold, determines a $SO(n)$-structure with singularities on $M$ (examples of such metrics were considered for example in~\cite{arteaga_malakhaltsev_trejos_2012}), $3$-webs with singularities on a two-dimensional manifolds, which in particular appear in considerations of algebraic ordinary differential equations on manifolds (see, e.\,g.,~\cite{agafonov2009}) define a $G$-structure with singularities, where $G \subset GL(2)$ is the Lie subgroup of diagonal matrices~\cite{arias_arteaga_malakhaltsev2015}, also sub-Riemannian manifolds with singularities determine the corresponding $G$-structure with singularities~\cite{arteaga_malakhaltsev2011}.

A $G$-principal bundle $\bar P$ with singularities determines a section $s : M \setminus \Sigma \to \bar P/G$, which is a section with singularities of the bundle $\bar P/G$.
For the case $\dim M = 2$, we apply the results obtained in Section~\ref{sec:2} to this situation and obtain the relation between the index of this section and the connection in the bundle $\bar P \to M$ (see Section~\ref{sec:5} for details).

\section{Singularity of section and its index}
\label{sec:2}

\subsection{Sections with singularities. Resolution of singularity}
\label{subsec:2_1}

Let $M$ be a $2$-dimensional oriented closed manifold, and $\xi = (\pi : E \to M)$ a locally trivial fiber bundle with typical fiber $F$ and structure group $G$.
We will assume that $G$ is a connected Lie group.
For each $x \in M$, by $F_x$ we will denote the fiber of $\xi$ over $x$, and by $i_x : F_x \to E$ the inclusion.
Let $\Sigma \subset M$ be a discrete subset.
Let us consider a section $s : M \setminus \Sigma \to E$, we will call $\Sigma$ the \emph{singular set} of the section $s$.

Assume that \emph{there exist a $2$-dimensional oriented compact manifold $S$ with boundary $\partial S$ and a map $q : S \to E$ such that
the map $q$ restricted to $\overset{\circ}{S} = S \setminus \partial S$ is a diffeomorphism between $S \setminus \partial S$ and $s(M \setminus \Sigma)$}.
We will call the map $q$ a \emph{resolution of the singularities of the section $s$}.

\begin{thm}
\emph{a)} $\pi \circ q : S \setminus \partial S \to M \setminus \Sigma$ is a diffeomorphism.

\emph{b)} The map $\pi \circ q : S \to M$ is surjective.

\emph{c)} $\pi \circ q(\partial S) = \Sigma$.

\emph{d)} Let $\partial S = \bigcup\limits_{i = \overline{1, k}} \partial S_i$, where $\partial S_i$ are the boundary components.
Then for any $x \in \Sigma$, there exist one and only one boundary component $\partial S_i$ such that $\pi^{-1}(x) = q(\partial S_i)$.

\emph{e)} For any $x \in \Sigma$ exists a neighborhood $U$ of $x$ such that
\begin{enumerate}
\item
$U$ is diffeomorphic to a disk;
\item
There exists a closed set $V \subset S$ diffeomorphic to a ring such that the boundary of $V$ consists of two connected components $C$ and $C'$, where the connected component $C$ is a closed curve in $S \setminus \partial S$ and the other connected component $C'$ is the connected component $\partial S_i$ of the boundary $\partial S$ which corresponds to $x$ by d), and
\begin{equation}
\pi \circ q(C) = \partial U, \quad \pi \circ q(V \setminus \partial V) = U \setminus x, \quad \pi \circ q(\partial S_i) = x
\label{eq:4}
\end{equation}
\end{enumerate}

\label{thm:1}
\end{thm}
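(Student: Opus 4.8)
The plan is to work with the continuous map $\phi:=\pi\circ q\colon S\to M$ and to use two soft facts repeatedly: $\overset{\circ}{S}$ is dense in $S$, and a diffeomorphism onto its image is a proper map. First I record a normal form: a section embeds $M\setminus\Sigma$ onto the submanifold $s(M\setminus\Sigma)$ with inverse $\pi|_{s(M\setminus\Sigma)}$, and for $p\in\overset{\circ}{S}$ one has $q(p)\in s(M\setminus\Sigma)$, hence $q(p)=s(\phi(p))$; thus $q|_{\overset{\circ}{S}}=s\circ(\phi|_{\overset{\circ}{S}})$. Part (a) is then immediate, since $\phi|_{\overset{\circ}{S}}=\bigl(\pi|_{s(M\setminus\Sigma)}\bigr)\circ\bigl(q|_{\overset{\circ}{S}}\bigr)$ is a composition of two diffeomorphisms. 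For (b): $\phi(S)$ is compact, hence closed in $M$, and contains the dense set $\phi(\overset{\circ}{S})=M\setminus\Sigma$ (each point of the discrete set $\Sigma$ is a limit of points not in $\Sigma$), so $\phi(S)=M$.

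The heart of (c) is the inclusion $\phi(\partial S)\subseteq\Sigma$. Suppose $p\in\partial S$ with $x_{0}:=\phi(p)\notin\Sigma$. For interior points $p'\to p$ we have $q(p')=s(\phi(p'))$ and $\phi(p')\to x_{0}\in M\setminus\Sigma$, so by continuity of $q$ and of $s$ at $x_{0}$ we get $q(p)=s(x_{0})\in s(M\setminus\Sigma)$. Pick a relatively open neighbourhood $K^{\circ}$ of $q(p)$ in the locally compact manifold $s(M\setminus\Sigma)$ with compact closure $K$. Since $q|_{\overset{\circ}{S}}$ is a homeomorphism onto $s(M\setminus\Sigma)$ it is proper, so $L:=(q|_{\overset{\circ}{S}})^{-1}(K)$ is compact, hence closed in $S$. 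By continuity there is a neighbourhood $N$ of $p$ in $S$ with $q(N)\subseteq K^{\circ}$, whence $N\cap\overset{\circ}{S}\subseteq L$; density of $\overset{\circ}{S}$ then forces $N\subseteq\overline{N\cap\overset{\circ}{S}}\subseteq L\subseteq\overset{\circ}{S}$, contradicting $p\in\partial S$. The reverse inclusion $\Sigma\subseteq\phi(\partial S)$ follows from (b) and (a): a point of $\Sigma$ lies in $\phi(S)$, but by (a) it cannot be the image of an interior point.

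For (d), the map $\phi|_{\partial S}\colon\partial S\to\Sigma$ is continuous into a discrete set, hence constant on each boundary component, so $\phi^{-1}(x)$ is a union of whole boundary components for every $x\in\Sigma$. Fix $x$, choose a disk neighbourhood $U$ of $x$ with $\overline{U}\cap\Sigma=\{x\}$ over which $\xi$ is trivial, and set $A:=\phi^{-1}(U\setminus\{x\})$. By (c) $A\subseteq\overset{\circ}{S}$, by (a) $\phi|_{A}$ is a diffeomorphism onto $U\setminus\{x\}$ (so $A$ is an open annulus), and $\phi^{-1}(\partial U)$ is a single circle. Using collar neighbourhoods of $\partial S$ one checks that the closure $\overline{A}$ in $S$ is a compact surface with boundary whose interior is $A$ and whose boundary is $\phi^{-1}(\partial U)\sqcup\phi^{-1}(x)$: near a point of $\phi^{-1}(\partial U)$ the local model is a half-disk inside an interior chart of $S$, while near a point of $\phi^{-1}(x)$ a collar of the relevant boundary component of $S$ is contained in $\overline{A}$. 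Since $\overline{A}$ is compact with non-empty boundary and interior the open annulus, the classification of surfaces gives $\overline{A}\cong S^{1}\times[0,1]$; hence $\partial\overline{A}$ has exactly two components, one being $\phi^{-1}(\partial U)$, so the other, $\phi^{-1}(x)$, is a single boundary component $\partial S_{i(x)}$, which gives the uniqueness. Finally $q(\overline{A})=\overline{q(A)}=\overline{s(U\setminus\{x\})}$ by density of $A$ in $\overline{A}$ and continuity of $q$, so $q(\partial S_{i(x)})=q(\overline{A})\cap\pi^{-1}(x)=\overline{s(M\setminus\Sigma)}\cap\pi^{-1}(x)$, and this is $\pi^{-1}(x)$ by the hypothesis that $s(M\setminus\Sigma)$ is the interior of a surface with boundary $\pi^{-1}(\Sigma)$.

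Part (e) is a restatement of the last paragraph: shrinking $U$ if necessary so that it is a disk, put $V:=\overline{A}$, $C:=\phi^{-1}(\partial U)$ and $C':=\partial S_{i(x)}$; then $V$ is a ring with $\partial V=C\sqcup C'$, $C$ is a closed curve in $\overset{\circ}{S}$, $C'$ is the boundary component assigned to $x$ by (d), and $\phi(C)=\partial U$, $\phi(V\setminus\partial V)=\phi(A)=U\setminus x$, $\phi(C')=x$, which is exactly \eqref{eq:4}. The main obstacle throughout is the local analysis of $\overline{A}$ along $\partial S$: verifying that it is honestly a manifold with boundary needs the collar structure together with the properness of $q|_{\overset{\circ}{S}}$, and the assertion that $q$ carries $\partial S_{i(x)}$ onto the entire fibre $\pi^{-1}(x)$ is exactly where one must use the full resolution hypothesis, namely that the closure of $s(M\setminus\Sigma)$ in $E$ adds precisely the fibres over the points of $\Sigma$.
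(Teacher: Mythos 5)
Your proof is correct and follows the same overall strategy as the paper's: (a) as a composition of the diffeomorphisms $q|_{\overset{\circ}{S}}$ and $\pi|_{s(M\setminus\Sigma)}$, (b) by compactness, and (d)--(e) by pulling back a punctured disk $U\setminus\{x\}$ to an open annulus $A\subset \overset{\circ}{S}$ and analysing its closure. There are two places where your route genuinely diverges, both to your advantage. First, for the inclusion $\pi\circ q(\partial S)\subseteq\Sigma$ in (c) the paper only records the easy inclusion $\Sigma\subseteq\pi\circ q(\partial S)$ coming from surjectivity; your properness argument (a boundary point mapping into $s(M\setminus\Sigma)$ would force a whole neighbourhood of it into the compact set $(q|_{\overset{\circ}{S}})^{-1}(K)\subset\overset{\circ}{S}$) supplies exactly the missing half, and this inclusion is needed later to know that $A=\phi^{-1}(U\setminus\{x\})$ avoids $\partial S$. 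Second, where the paper rules out extra boundary components of $\overline{V_0}$ by the remark that otherwise $\pi_1(V_0)\not\cong\mathbb{Z}$, you invoke the classification of compact surfaces with boundary whose interior is an open annulus; since $S$ is oriented this excludes the M\"obius band and gives exactly two boundary circles, which is cleaner (the verification that $\overline{A}$ really is a compact surface with boundary, via collars, is the same unavoidable local analysis in both versions). One caveat: your final step, $q(\partial S_{i(x)})=\overline{s(M\setminus\Sigma)}\cap\pi^{-1}(x)=\pi^{-1}(x)$, uses the hypothesis from the introduction that $s(M\setminus\Sigma)$ is the interior of a surface in $E$ with boundary $\pi^{-1}(\Sigma)$; the formal definition of a resolution in subsection~\ref{subsec:2_1} only assumes that $q|_{\overset{\circ}{S}}$ is a diffeomorphism onto $s(M\setminus\Sigma)$, under which one only obtains $q(\partial S_{i(x)})\subseteq\pi^{-1}(x)$ --- and indeed the paper's own proof only establishes $\pi\circ q(\partial S_i)=x$ as in \eqref{eq:4}, despite the stronger wording of d). This is a defect of the statement rather than of your argument, but you should make explicit which form of the hypothesis you are invoking at that point.
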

\begin{rem}
\emph{In what follows we will label each point $x \in \Sigma$ such that $\pi^{-1}(x) = q(\partial S_i)$ by the corresponding index $i$}.
\label{rem:1}
\end{rem}

\begin{proof}
a) $s : M \setminus \Sigma \to s(M \setminus \Sigma)$ is a diffeomorphism and $\left. \pi \right|_{s(M \setminus \Sigma)} : s(M \setminus \Sigma) \to M \setminus \Sigma$ is the diffeomorphism inverse to $s$.
Therefore $\pi \circ q : S \setminus \partial S \to M \setminus \Sigma$ is a diffeomorphism

b) For each point $x \in M \setminus \Sigma$, the point $y = q^{-1}(s(x)) \in S$ has the property that $\pi \circ q(y) = x$.
Let us take a point $x \in \Sigma$ and a sequence of points $\left\{ x_n \in M \setminus \Sigma \right\}$ converging to $x$.
The sequence of points $y_n = q^{-1}(s(x_n)) \in S$ has a convergent subsequence $\left\{ y_{n_k} \right\}$ because $S$ is compact.
If $y = \lim_{n_k \to \infty} y_{n_k}$, then
\begin{equation}
\pi \circ q(y) = \lim_{n_k \to \infty} \pi q(y_{n_k}) = \lim_{n_k \to \infty} x_{n_k} = x.
\label{eq:5}
\end{equation}

c) As $\pi \circ q$ is surjective and $\pi \circ q(S \setminus \partial S) = M \setminus \Sigma$ because $q(S \setminus \partial S) = s(M \setminus \Sigma)$, we have that $\pi \circ q(\partial S) = \Sigma$.

d) and e)
For a point $x \in \Sigma$ let us take a neighborhood $U$ diffeomorphic to an open disk such that $U_0 = U \setminus \left\{ x \right\} \subset M \setminus \Sigma$.
The closure $\overline{U_0} = \Gamma \sqcup U_0 \sqcup \left\{ x \right\}$, where $\Gamma$ is the boundary of $U$.

Let $V_0 = q^{-1} s(U_0)$. Then $\pi \circ q(\overline{V_0}) = \overline{U_0}$ because $M$ and $S$ are compact.
As $\pi \circ q : S \setminus \partial S \to M \setminus \Sigma$ is a diffeomorphism, and $\Gamma \subset M \setminus \Sigma$, we have a curve $C \subset (\overline{V_0} \setminus V_0) \cap (S \setminus \partial S)$ such that $\pi \circ q(C) = \Gamma$.
This means that $C$ is a part of the boundary $\partial V_0$ which lies in the interior part of $S$.
Now, by b) we know that $\pi^{-1}(x) \subset \sqcup_{i \in I} \partial S_i$, $I \subset \{1, \cdots, n\}$.
Finally we have that $\overline{V_0} = C \sqcup V_0 \sqcup \left( \sqcup_{i \in I} \partial S_i \right)$, and we know that $V_0$ is diffeomorphic to $U_0$, and $C \subset S \setminus \partial S$ is diffeomorphic to $\Gamma$, that is $C$ is diffeomorphic to a circle $\mathbb{S}^1$.
Thus $V_0$ is an open set in $S$ diffeomorphic to an open ring, and the boundary of $V_0$ consists of a closed curve in the interior part of $S$ and some components of the boundary of $S$.
However, this means that the boundary of $V_0$ contains one and only one component $\partial S_i$ of $\partial S$, otherwise the fundamental group of $\pi_1(V_0) \not\cong \mathbb{Z}$.
Thus we get the affirmation d) and the decomposition $V = \overline{V_0} = C \sqcup V_0 \sqcup \partial S_i$ with the properties given by~\eqref{eq:4}.
Therefore, we have proved e).
\end{proof}
\begin{rem}
From the proof of items d) and e) of Theorem~\ref{thm:1} it follows that, for each boundary component $\partial S_i$, we have a closed neighborhood $V_i$ of $\partial S_i$ and a diffeomorphism of manifolds with boundary $u_i : \mathbb{S}^1 \times [0, 1] \to V_i$ such that $u_i(\mathbb{S}^1 \times \left\{ 1 \right\} = \partial S_i$, $U = \pi \circ q \circ u_i(\mathbb{S}^1 \times [0, 1])$ is a closed neighborhood of $x_i \in \Sigma$ diffeomorphic to a disk, $\pi \circ q \circ u_i(\mathbb{S}^1 \times \left\{ 0 \right\}$ is the boundary of $U$, and $\pi \circ q \circ u_i(\mathbb{S}^1 \times \left\{ 1 \right\} = x$ (see Figure~\ref{fig:1}).
\begin{figure}[h]
\centering
\includegraphics[scale=0.1]{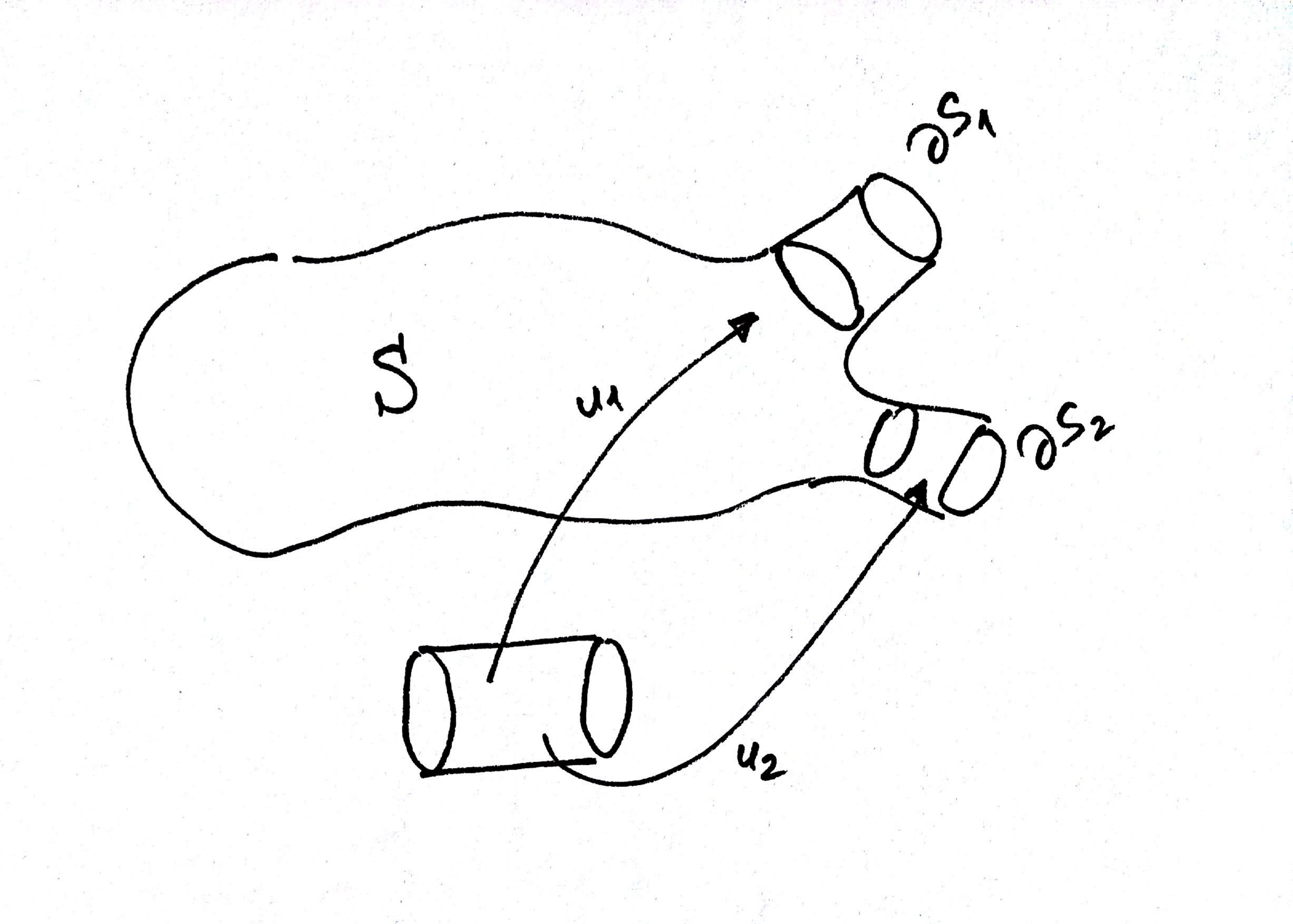}
\caption{The surface $S$}
\label{fig:1}
\end{figure}
\label{rem:2}
\end{rem}

\begin{ex}
Let $\xi$ be a unit vector bundle of $\mathbb{R}^2$, $\pi : \mathbb{R}^2 \times \mathbb{S}^1 \to \mathbb{R}^2$, and consider a section $s : \mathbb{R}^2 \to E$ which takes value $s(\rho, \varphi) = (\cos n\varphi, \sin n\varphi)$ with respect to the polar coordinate system on $\mathbb{R}^2 \setminus (0, 0)$.
The point $(0, 0)$ is the singular point of this section.
If we take the coordinate system $(\rho, \varphi, \psi)$ on $\pi^{-1}(\mathbb{R}^2 \setminus (0, 0))$, then $s(\rho, \varphi) = (\rho, \varphi, n\varphi)$.
Now the map $u : \mathbb{S}^1 \times [0, 1] \to E$ is given with respect to coordinates $(x, y, \psi)$ on $E = \mathbb{R}^2 \times \mathbb{S}^1$ by $u(\alpha, t) = ( (1 - t)\cos\alpha, (1-t)\sin\alpha, n \alpha)$.
Therefore $u( \mathbb{S}^1 \times \left\{ 0 \right\})$ is a curve consisting of helices, $u( \mathbb{S}^1 \times \left\{ 1 \right\})$ is the fiber $(0, 0) \times \mathbb{S}^1 \subset E$ represents the image of $u$.
\begin{figure}[h]
\centering
\includegraphics[scale=0.4]{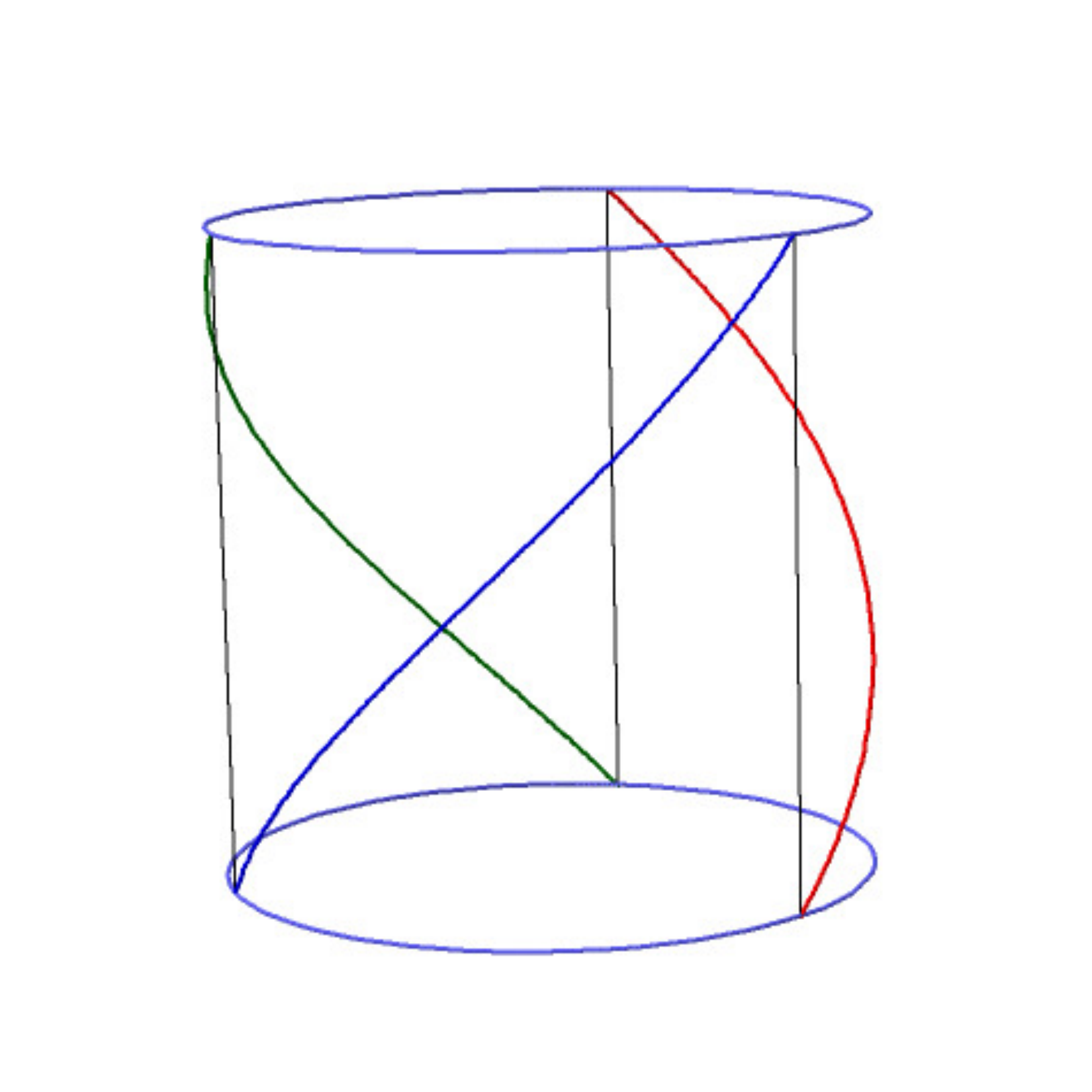}
\caption{Example~\ref{ex:1}}
\label{fig:2}
\end{figure}
\begin{figure}[h]
\centering
\includegraphics[scale=0.4]{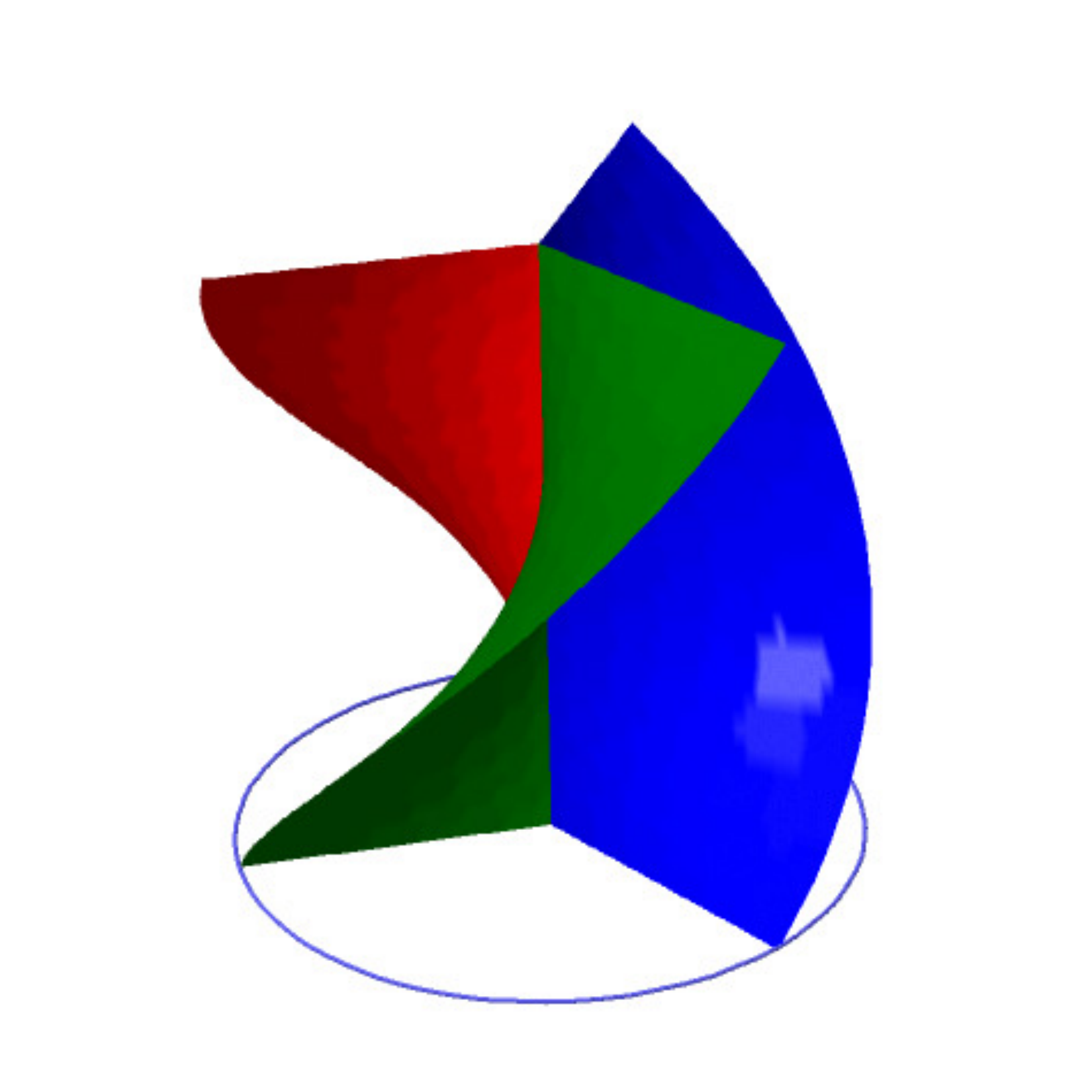}
\caption{Example~\ref{ex:1}}
\label{fig:3}
\end{figure}
\label{ex:1}
\end{ex}

\subsection{Index of a point $x \in \Sigma$.}
\label{subsec:2_2}
First let us first observe that $\pi : E \to M$ is trivial on the homotopy level.
Let $(U, \psi: \pi^{-1}(U) \to U \times F)$ be a chart of the atlas of $\xi$.
Let
\begin{equation}
\eta = p_F \circ \psi: \pi^{-1}(U) \to F,
\label{eq:6}
\end{equation}
where $p_F : U \times F \to F$ is the canonical projection onto $F$.
For each $x \in U$ the map $\eta$ restricted to $F_x = \pi^{-1}(x)$ induces a diffeomorphism $\eta_x : F_x \to F$.
Note that
if we take another chart $(U', \psi': \pi^{-1}(U') \to U' \times F)$, and $\eta' : \pi^{-1}(U') \to F$ is the corresponding map, then on $\pi^{-1}(U \bigcap U')$ we have that
\begin{equation}
\psi' \circ \psi^{-1} : (U \cap U') \times F \to (U \cap U') \times F, \quad (x, y) \mapsto (x, g(x) y),
\label{eq:7}
\end{equation}
where $g : U \cap U' \to G$ is the gluing map of the charts.
Now, for any $x \in U \cap U'$, we have $\eta'_x \circ \eta_x^{-1} (y) = g(x) y$, and, as $G$ is connected, $\eta'_x \circ \eta_x^{-1} : F \to F$ is homotopic to the identity map.
This means that for any $x \in M$ we have well defined isomorphisms of the homotopy and (co)homology groups:
\begin{equation}
\begin{array}{l}
\pi_*(\eta_x) : \pi_*(F_x) \to \pi_*(F),
\\[5pt]
H_*(\eta_x) : H_*(F_x) \to H_*(F), \quad H^*(\eta_x) : H^*(F) \to H^*(F_x),
\end{array}
\label{eq:8}
\end{equation}
which do not depend on the chart.
In particular, the following diagram is commutative:
\begin{equation}
\xymatrix{\pi_1(F_x) \times H^1(F_x) \ar[dr]_{\langle \cdot, \cdot \rangle}\ar[rr]^{\pi_1(\eta_x)\times H^1(\eta_x^{-1})} & & \pi_1(F) \times H^1(F) \ar[dl]^{\langle \cdot, \cdot \rangle}
\\
&\mathbb{R} &
}
\label{eq:9}
\end{equation}
where
$\langle \cdot, \cdot \rangle$ is the natural pairing through the integration.
This means that, for $[\gamma] \in \pi_1(F_x)$ and $[\alpha] \in H^1(F)$, we have
\begin{equation}
\left\langle [\gamma], H^1(\eta_x)[\alpha] \right\rangle = \int_\gamma \eta_x^* \alpha = \int_{\eta_x \gamma} \alpha = \left\langle \pi_1(\eta_x)[\gamma], [\alpha] \right\rangle,
\label{eq:10}
\end{equation}
and this equality neither depends on the chart, nor on the choice of representatives $\gamma$ or $\alpha$ of their equivalence classes.
\begin{df}
For $x_i \in \Sigma$ the map $q \circ u_i : \mathbb{S}^1 \times [0, 1] \to E$ restricted to $\mathbb{S}^1 \times \left\{ 1 \right\}$ gives a map $\gamma_i : \mathbb{S}^1 \to F_{x_i}$, and therefore an element $ind_{x_i}(s) \in \pi_1(F)$ via the isomorphism $\pi_1(\eta_x)$ (see~\eqref{eq:8}).
This element $ind_{x_i}(s)$ will be called the \emph{index of the section $s$ at the point $x_i \in \Sigma$}.
\label{df:1}
\end{df}
\begin{rem}
The index does not depend on the choice of the maps $u_i$.
\label{rem:3}
\end{rem}
\begin{ex}
The index of the section $s$ at the point $(0, 0)$ in Example~\ref{ex:1} is equal to $n \gamma$, where $\gamma$ is a generator of the group $\pi_1(\mathbb{S}^1)$.
\label{ex:2}
\end{ex}
\begin{df}
For $a \in H^1(F)$ we define the \emph{index $ind_{x_i}(s;a)$ of the section $s$ at the point $x_i \in \Sigma$ with respect to $a$} as
\begin{equation}
ind_{x_i}(s;a) = \langle a, ind_{x_i}(s) \rangle = \int_\gamma \alpha = \int_{\mathbb{S}^1} \gamma^* \alpha,
\label{eq:11}
\end{equation}
where $\gamma : \mathbb{S}^1 \to F$ is the curve representing the element $ind_{x_i}(s) \in \pi_1(F)$ and $\alpha \in \Omega^1(F)$ is a closed form representing the element $a \in H^1(F)$.
\emph{The index of a section $s$ with respect to $a$} is the number
\begin{equation}
ind(s; a) = \sum_{x_i \in \Sigma} ind_{x_i}(s;a).
\label{eq:12}
\end{equation}
\label{df:2}
\end{df}
\begin{rem}
By~\eqref{eq:11},
\begin{equation}
ind_{x_i}(s;a) = \int_{\gamma_i} \eta_x^*\alpha,
\label{eq:13}
\end{equation}
where $[\alpha] = a$ and $\gamma_i$ is defined in Definition~\ref{df:1}.
\label{rem:4}
\end{rem}
\begin{ex}
Let $a = [ d\phi ] \in H^1(\mathbb{S}^1; \mathbb{R})$, where $d\phi$ is the angular form on $\mathbb{S}^1$, then $a$ is a base of $H^1(\mathbb{S}^1; \mathbb{R})$.
Then index of the section $s$ at the point $(0, 0)$ in Example~\ref{ex:1} with respect to $a$ is equal to $n$.
\label{ex:3}
\end{ex}
\section{Connection and the Gauss-Bonnet theorem}
\label{sec:3}
Let $H$ be a connection in the fiber bundle $E$.
Let us denote by $V$ the \emph{vertical distribution}, that is the distribution of the tangent spaces to the fibers, then we have $TE = H \oplus V$.
\subsection{Vertical cohomology class}
\label{subsec:3_1}
\begin{stat}
Let $a \in H^1(F)$ and $H$ be a connection in $E$.
There exists a $1$-form $\alpha \in \Omega^1(E)$ such that
\begin{enumerate}
\item
$\left.\alpha\right|_H = 0$;
\item
for each $x \in M$, $d i_x^* \alpha = 0$ and $[ i_x^* \alpha ] = H^1(\eta_x) a$.
\end{enumerate}
\label{stat:1}
\end{stat}
\begin{proof}
Let us take an atlas $(U_i, \psi_i)$ of the bundle $\xi$, and fix a $1$-form $\bar\alpha \in \Omega^1(F)$ such that $[\bar\alpha] = a$.
Let $\rho_i$ be the partition of unity subordinate to the covering $\left\{ U_i \right\}$, and $\eta_i : \pi^{-1}(U_i) \to F$ (see~\eqref{eq:6}).
Then take the collection of $1$-forms $\bar\alpha_i = \eta^*_i \bar\alpha \in \Omega^1(\pi^{-1}(U_i))$, and then construct $\widetilde{\alpha} = \sum_i \pi^*\rho_i\, \bar\alpha_i \in \Omega^1(V)$.
Then
\begin{equation}
i_x^* \widetilde{\alpha} = \sum_i \rho_i(x)\, i_x^* \bar\alpha_i = \sum_i \rho_i(x)\, i_x^* \eta_i^* \bar\alpha \in \Omega^1(F_x).
\label{eq:14}
\end{equation}
Therefore,
\begin{equation}
d i_x^* \widetilde{\alpha} = \sum_i \rho_i(x)\, i_x^* \eta_i^* d\bar\alpha = 0
\label{eq:15}
\end{equation}
and
\begin{equation}
[ i_x^* \widetilde{\alpha} ] = \sum_i \rho_i(x)\, [ i_x^* \eta_i^* \bar\alpha ] = a,
\label{eq:16}
\end{equation}
because $\sum_i \rho_i(x) = 1$.
We have the decomposition $TE = H \oplus V$, and therefore we can define the desired $\alpha \in \Omega^1(E)$ by
$\left.\alpha\right|_H = 0$ and $\left.\alpha\right|_{V} = \widetilde{\alpha}$.
\end{proof}
\subsection{Linear connection in the vertical bundle $V \to E$}
\label{subsec:3_2}
Let $\Delta$ be a distribution on a manifold $B$.
A \emph{partial $\Delta$-connection} in a vector bundle $\pi : E \to B$ is given by a covariant derivative:
\begin{equation*}
\nabla : \Delta \times \Gamma(\pi)\to \Gamma(\pi)
\end{equation*}
such that
\begin{equation}
\begin{array}{l}
\nabla_{\lambda X + Y} s = \lambda \nabla_X s + \nabla_Y s,
\quad \lambda \in \mathcal{F}(B), X \in \mathfrak{X}(\Delta), s \in \Gamma(\pi);
\\
\nabla_{X} s_1 + s_2 = \nabla_X s_1 + \nabla_Y s_2;
X \in \mathfrak{X}(\Delta), s_1, s_2 \in \Gamma(\pi);
\\
\nabla_{X} f s = Xf \, s + f \nabla_X s;
\quad f \in \mathcal{F}(B), X \in \mathfrak{X}(\Delta), s \in \Gamma(\pi).
\end{array}
\label{eq:17}
\end{equation}
\subsubsection{Linear connection in the vector bundle $V \to E$ induced by the connection $H$}.
\label{subsubsec:3_2_1}
The connection $H$ induces a partial linear $H$-connection $\nabla^H$ in the vector bundle $V \to E$:
\begin{equation}
\nabla^H_X Y = \omega( [X, Y] ), \quad X \in \mathfrak{X}^h(E), Y \in \mathfrak{X}^v(E)
\label{eq:18}
\end{equation}
in the following way.
One can easily check that the properties~\eqref{eq:17} hold true.
The geometrical sense of the partial $H$-connection is as follows.
The connection $H$ induces a parallel translation of the fibers of the bundle $E$ along the paths on the base $M$: for $\gamma : [0, 1] \to M$, we have the diffeomorphism $\Pi_\gamma : F_{\gamma(0)} \to F_{\gamma(1)}$.
Therefore, if $\gamma^h$ is a horizontal lift of $\gamma$, we have the parallel translation $\Pi^H_{\gamma} = \left.d\Pi\right|_{\gamma^h(0)} : V_{\gamma^h(0)}E \to V_{\gamma^h(1)}E$ along the curve $\gamma^h$.
As $\Pi^H$ is linear, from this parallel translation we get a partial linear connection $\nabla^H$ which is exactly the connection~\eqref{eq:18}.

\subsubsection{Linear connection in the vector bundle $V \to E$}
\label{subsubsec:3_2_2}
Let us fix a linear partial $V$-connection $\nabla^V$ in the bundle $V \to E$, then the connection $\nabla^V$ induces a liner connection in each fiber.
We will assume that this connection does not have torsion.
Then let us consider the linear connection $\nabla = \nabla^H + \nabla^V$ in the bundle $V \to E$:
\begin{equation}
\nabla_X Y = \nabla^H_{X^h} Y + \nabla^V_{X^v} Y,
\label{eq:19}
\end{equation}
where $X = X^h + X^v$ corresponds to the decomposition $TE = H \oplus V$, and $Y \in \mathfrak{X}^v(E)$.

\subsection{Structure equations of connection}
\label{subsec:3_3}
\begin{stat}
Let $\xi = (\pi : E \to B)$ be a vector bundle of rank $r$ over a manifold $B$, and $\nabla$ be a linear connection in this bundle.
Then we can extend the exterior differential $d : \Omega^q(B) \to \Omega^{q + 1}(B)$ to a differential operator $D : \Omega^q(B) \otimes \Gamma(\xi) \to \Omega^{q + 1}(B) \otimes \Gamma(\xi)$, where $\Gamma(\xi)$ is the module of sections of $\xi$ so that
\begin{equation}
D(\omega \otimes s) = d\omega \otimes s + (-1)^{|\omega|} \omega \wedge \nabla s.
\label{eq:20}
\end{equation}
\label{stat:2}
\end{stat}
\begin{proof}
Let $\{e_\alpha\}_{\alpha = \overline{1, r}}$ be a local frame field of $\xi$ over $U \subset B$.
Then any $\omega \in \Omega^q(B) \otimes \Gamma(\xi)$ restricted to $U$ can be written in the form $\omega = \omega^\alpha e_\alpha$.
If we change the frame field to $\{e_{\alpha'}\}_{\alpha' = \overline{1, r}}$, then $e_{\alpha'} = A_{\alpha'}^\alpha e_\alpha$ and $\omega^{\alpha'} = A_{\alpha}^{\alpha'}\omega^\alpha$, where $A_{\alpha'}^\alpha$ are functions on $U$.
Then,
\begin{multline}
D(\omega^{\alpha'} e_{\alpha'}) = d\omega^{\alpha'} e_{\alpha'}+ (-1)^q \omega^{\alpha'} \wedge \nabla e_{\alpha'}
= d( A_\beta^{\alpha'} \omega^\beta ) A_{\alpha'}^\alpha e_\alpha + (-1)^q \omega^{\alpha'} \wedge \nabla (A_{\alpha'}^\alpha e_\alpha)
\\
= d A_\beta^{\alpha'} \wedge \omega^\beta A_{\alpha'}^\alpha e_\alpha+ A_\beta^{\alpha'} d\omega^\beta A_{\alpha'}^\alpha e_\alpha+ (-1)^q \omega^{\alpha'} \wedge dA_{\alpha'}^\alpha e_\alpha+ (-1)^q \omega^{\alpha'} \wedge A_{\alpha'}^\alpha \nabla e_\alpha
\\
= (A_{\alpha'}^\alpha d A_\beta^{\alpha'} \wedge \omega^\beta + (-1)^q \omega^{\beta} \wedge A_\beta^{\alpha'} dA_{\alpha'}^\alpha) e_\alpha+ d\omega^\alpha e_\alpha+ (-1)^q \omega^{\alpha} \wedge \nabla e_\alpha
\\
= (A_{\alpha'}^\alpha d A_\beta^{\alpha'} + A_\beta^{\alpha'} dA_{\alpha'}^\alpha) \wedge \omega^{\beta} e_\alpha + D(\omega^\alpha e_\alpha) = D(\omega^\alpha e_\alpha),
\label{eq:21}
\end{multline}
because $A_{\alpha'}^\alpha d A_\beta^{\alpha'} + A_\beta^{\alpha'} dA_{\alpha'}^\alpha = d( A_{\alpha'}^\alpha A_\beta^{\alpha'} ) = d \, \delta_\beta^\alpha = 0$.
\end{proof}
\begin{rem}
$D^2 (\omega \otimes s) = \omega \wedge R(s)$, where $R$ is the curvature tensor of the linear connection $\nabla$.
\label{rem:5}
\end{rem}
\begin{stat}
For $\omega \in \Omega^1(B) \otimes \Gamma(E)$, and $X, Y \in \mathfrak{X}(B)$,
\begin{equation}
D\omega(X, Y) = \nabla_X (\omega(Y)) - \nabla_Y (\omega(X)) - \omega([X, Y]).
\label{eq:22}
\end{equation}
\label{stat:3}
\end{stat}
\begin{proof}
Let $\left\{ e_\alpha \right\}$ be a local frame field of $\xi$, and $\omega = \omega^\alpha e_\alpha$.
Then
\begin{equation}
\nabla_X (\omega(Y) ) = \nabla_X (\omega^\alpha(Y) e_\alpha ) = X(\omega^\alpha(Y)) e_\alpha + \omega^\alpha(Y) \nabla_X e_\alpha,
\label{eq:23}
\end{equation}
therefore
\begin{multline}
\nabla_X (\omega(Y)) - \nabla_Y (\omega(X)) - \omega([X, Y]) =
\\
X(\omega^\alpha(Y)) e_\alpha + \omega^\alpha(Y) \nabla_X e_\alpha - Y(\omega^\alpha(X)) e_\alpha - \omega^\alpha(X) \nabla_Y e_\alpha - \omega^\alpha([X, Y]) e_\alpha =
\\
d\omega^\alpha(X, Y) e_\alpha - \omega^\alpha \wedge \nabla e_\alpha (X, Y) = D\omega (X, Y).
\label{eq:24}
\end{multline}
\end{proof}

Now let us assume that we have chosen the linear connection $\nabla$ in the vector bundle $V \to E$ given by~\eqref{eq:19}, and therefore we have the corresponding differential operator $D : \Omega^q(B) \otimes \mathfrak{X}^V(E) \to \Omega^{q + 1}(B) \otimes \mathfrak{X}^V(E)$, where $\mathfrak{X}^V(E)$ is the Lie algebra of vertical vector fields.
\begin{stat}[Structure equations]
Let $\omega : TE \to V$ be the connection form of the connection $H$.
Then
\begin{equation}
D\omega = \Omega,
\label{eq:25}
\end{equation}
where $\Omega(X, Y) = D\omega(X^h, Y^h) \in \Omega^2(E) \otimes \mathfrak{X}^V(E)$ is horizontal.
\label{stat:4}
\end{stat}
\begin{proof}
It is sufficient to prove that $D\omega(X, Y) = 0$ if $Y \in \mathfrak{X}^v(E)$.
We will apply~\eqref{eq:22}.
If $X \in \mathfrak{X}^h(E)$, then
\begin{equation}
D\omega(X, Y) = \nabla_X \omega(Y) - \nabla_Y \omega(X) - \omega( [X, Y] ) = \nabla_X Y - \omega( [X, Y] ) = 0
\label{eq:26}
\end{equation}
by the definition of $\nabla$ (see~\eqref{eq:18} and~\eqref{eq:19}).
If $X \in \mathfrak{X}^v(E)$, then
\begin{equation}
D\omega(X, Y) = \nabla_X \omega(Y) - \nabla_Y \omega(X) - \omega( [X, Y] ) = 0
\label{eq:27}
\end{equation}
again by the definition of $\nabla$ and because the connection $\nabla^V$ is torsion free.
\end{proof}
The tensor $\Omega \in \Omega^2(E) \otimes \mathfrak{X}^V(E)$ is called the \emph{curvature form of the connection $H$}.
\begin{cor}
The curvature form $\Omega$ vanishes if and only if $H$ is integrable.
\label{cor:1}
\end{cor}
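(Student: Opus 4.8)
The plan is to prove Corollary~\ref{cor:1} directly from the structure equation~\eqref{eq:25} together with the classical Frobenius criterion for integrability of the horizontal distribution $H$. First I would recall that the connection form $\omega : TE \to V$ is precisely the projection onto the vertical summand along $H$, so that $\omega(X) = 0$ if and only if $X$ is horizontal; in particular $H = \ker\omega$. The distribution $H$ is integrable (in the Frobenius sense) precisely when $[X,Y] \in \Gamma(H)$ for all $X,Y \in \mathfrak{X}^h(E)$, equivalently when $\omega([X,Y]) = 0$ for all horizontal $X,Y$.

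Next I would compute $\Omega(X,Y)$ for $X,Y \in \mathfrak{X}^h(E)$ using~\eqref{eq:22} applied to $\omega$: since $\omega(X) = \omega(Y) = 0$ for horizontal fields, the first two terms $\nabla_X\omega(Y)$ and $\nabla_Y\omega(X)$ vanish, and we are left with
\begin{equation*}
\Omega(X,Y) = D\omega(X,Y) = -\omega([X,Y]), \quad X, Y \in \mathfrak{X}^h(E).
\end{equation*}
Thus $\Omega$ vanishes identically if and only if $\omega([X,Y]) = 0$ for every pair of horizontal vector fields $X,Y$. Combined with the previous paragraph, this says $\Omega \equiv 0$ if and only if $H$ is involutive, i.e. integrable by the Frobenius theorem. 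One should also note, for the "only one direction" bookkeeping, that $\Omega$ is horizontal by Statement~\ref{stat:4}, so checking it on horizontal pairs is enough to conclude it is the zero tensor.

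The argument is essentially a one-line consequence of Statements~\ref{stat:3} and~\ref{stat:4}, so there is no serious obstacle; the only point requiring a little care is making the identification $H = \ker\omega$ and the characterization $\Omega(X,Y) = -\omega([X,Y])$ on horizontal fields explicit, and then invoking Frobenius. I would also remark that this recovers the familiar fact from the theory of principal and fiber bundle connections that vanishing curvature is equivalent to the existence of local flat (horizontal) sections, so that locally the bundle looks like a product with $s$ a genuine section — though this last comment is not needed for the proof itself.
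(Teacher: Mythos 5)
Your argument is correct and follows the same route as the paper: both apply Statement~\ref{stat:3} to $\omega$ on horizontal fields to obtain $D\omega(X,Y) = -\omega([X,Y])$, and then identify the vanishing of this expression with involutivity of $H$ via the Frobenius theorem. Your additional remark that $\Omega$ is horizontal (so it suffices to test on horizontal pairs) is a useful piece of bookkeeping the paper leaves implicit, but the proof is essentially identical.
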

\begin{proof}
Indeed, by Statement~\ref{stat:3}, we have that, for any horizontal vector fields $X$ and $Y$,
\begin{equation}
D\omega(X, Y) = - \omega([X, Y]),
\label{eq:28}
\end{equation}
therefore $D\omega(X^h, Y^h) = 0$, for any $X^h$ and $Y^h$ if and only if the Lie bracket of horizontal vector fields is again a horizontal vector field, this is equivalent to integrability of $H$.
\end{proof}

\subsection{Structure equations of the $1$-form $\alpha$}
\label{subsec:3_4}
The decomposition $TE = H \oplus V$ induces the bigrading on the space of differential forms on $E$: $\Omega^k(E) = \oplus_{l + m = k} \Omega^{(l, m)}(E)$, and $d : \Omega^{(0, l)}(E) \to \Omega^{(0, l+1)}(E)$ because the distribution $V$ is integrable.

Let $a \in H^1 F$ and $\widetilde{\alpha} \in \Omega^1(V)$ is a $1$-form such that
$\eta^*( [\left.\widetilde{\alpha}\right|_{F_x} ]) = a$.
In Statement~\ref{stat:1} we have constructed a $1$-form $\alpha \in \Omega^{(0, 1)}E)$ such that $d\alpha$ vanishes on each fiber.
Note that the form $\alpha(X) = \widetilde{\alpha}(\omega(X))$ satisfies the conditions (1) and (2) of Statement~\ref{stat:1}.

Let us calculate $d\alpha$.
As $X \widetilde{\alpha}(\omega(Y)) = \nabla_X \widetilde{\alpha}(\omega(Y)) + \widetilde{\alpha} (\nabla_X(\omega(Y))$, we have
\begin{multline}
d\alpha (X, Y) =X \alpha(Y) - Y \alpha(X) - \alpha( [X, Y] ) =
X \widetilde{\alpha}(\omega(Y)) - Y \widetilde{\alpha}(\omega(X)) - \widetilde{\alpha}(\omega([X, Y]) =
\\
\nabla_X \widetilde{\alpha}(\omega(Y)) + \widetilde{\alpha} (\nabla_X(\omega(Y))
- \nabla_Y \widetilde{\alpha}(\omega(X)) + \widetilde{\alpha} (\nabla_Y(\omega(X))
- \widetilde{\alpha}(\omega([X, Y]) =
\\
\nabla_X \widetilde{\alpha}(\omega(Y)) - \nabla_Y \widetilde{\alpha}(\omega(X)) +
\widetilde{\alpha}(D\omega).
\label{eq:29}
\end{multline}
Here we use Statement~\ref{stat:3}.
Let us consider the $2$-form
\begin{equation}
P \in \Omega^2(E), \quad P(X, Y) = \nabla_X \widetilde{\alpha}(\omega(Y)) - \nabla_Y \widetilde{\alpha}(\omega(X))
\label{eq:30}
\end{equation}
If $X, Y \in \mathfrak{X}^h(E)$, then $P(X, Y) = 0$.
Now if $X, Y \in \mathfrak{X}^v(E)$, then $P(X, Y) = d\widetilde{\alpha}(X, Y) = 0$.
Therefore, the form $P \in \Omega^{(1, 1)}(E)$, and for $X \in \mathfrak{X}^h(E)$ and $Y \in \mathfrak{X}^v(E)$, by~\eqref{eq:30}, we get that
\begin{equation}
P(X, Y) =\nabla_X \widetilde{\alpha} (\omega(Y)) =
X (\widetilde{\alpha}(Y)) - \widetilde{\alpha}(\nabla_X Y ) = X \alpha(Y) - \alpha( [X, Y] ) = L_X \alpha.
\label{eq:31}
\end{equation}
Now, using Statement~\ref{stat:3}, we arrive at the following statement.
\begin{stat}
The form $\alpha$ constructed in Statement~\ref{stat:1} lies in $\Omega^{(0, 1)}(E)$ and
$d \alpha = \theta_{(1, 1)} + \theta_{(2, 0)}$, where $\theta_{(1, 1)} \in \Omega^{(1, 1)}$ and $\theta_{(2, 0)} \in \Omega^{(2, 0)}$, and
\begin{equation}
\theta_{(1, 1)}(X, Y) = (L_X \alpha)(Y), \quad \theta_{(2, 0)} = \widetilde{\alpha}(\Omega).
\label{eq:32}
\end{equation}
\label{stat:5}
\end{stat}
Note that the forms $\theta_{(1, 1)}$ and $\theta_{(2, 0)}$ in~\eqref{eq:32} depends only on the connection $H$ and the form $\alpha$.
Now we apply the Stokes theorem to the surface $S$ and arrive at a generalization of the Gauss-Bonnet theorem:
\begin{thm}
For $a \in H^1(F)$, let $\alpha \in \Omega^1(E)$ be the corresponding vertical form constructed in Statement~\ref{stat:1}.
Also let $d\alpha = \theta_{(1, 1)} + \theta_{(2, 0)}$ as in Statement~\ref{stat:5}.
Then
\begin{equation}
\int\limits_S q^*\theta_{(1, 1)} + q^*\alpha(\Omega) = \sum\limits_{i = \overline{1, k}} index_{x_i}(s; a).
\label{eq:33}
\end{equation}
\label{thm:2}
\end{thm}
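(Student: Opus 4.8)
The plan is to recognise the $2$-form in the left-hand side of \eqref{eq:33} as an exact form on $S$ and to evaluate its integral by Stokes' theorem, reading the resulting boundary integral off as the sum of the indices. First I would observe that the integrand is precisely $q^*(d\alpha)$: by Statement~\ref{stat:5} we have $d\alpha=\theta_{(1,1)}+\theta_{(2,0)}$ with $\theta_{(2,0)}=\widetilde\alpha(\Omega)$, and since by Statement~\ref{stat:4} the curvature form $\Omega$ takes values in vertical vector fields, while $\alpha$ restricted to $V$ equals $\widetilde\alpha$, we may write $\theta_{(2,0)}=\alpha(\Omega)$. Hence $q^*\theta_{(1,1)}+q^*\alpha(\Omega)=q^*(d\alpha)=d(q^*\alpha)$, where $q^*\alpha\in\Omega^1(S)$ because the resolution $q$ is smooth up to $\partial S$. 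As $S$ is a compact oriented surface with boundary, Stokes' theorem gives
\[
\int_S q^*(d\alpha)=\int_{\partial S}q^*\alpha=\sum_{i=1}^k\int_{\partial S_i}q^*\alpha ,
\]
with $\partial S=\bigcup_{i=1}^k\partial S_i$ endowed with the boundary orientation induced from $S$ (recall Theorem~\ref{thm:1}(d)).

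Next I would identify $\int_{\partial S_i}q^*\alpha$ with $ind_{x_i}(s;a)$ for each $i$. By Remark~\ref{rem:2} the map $u_i$ restricts to a diffeomorphism $\mathbb{S}^1\times\{1\}\to\partial S_i$, and, by Definition~\ref{df:1}, $\gamma_i=q\circ u_i|_{\mathbb{S}^1\times\{1\}}$ is exactly the loop in $F_{x_i}$ whose class defines $ind_{x_i}(s)$. Therefore $\int_{\partial S_i}q^*\alpha=\int_{\mathbb{S}^1}\gamma_i^*\alpha=\int_{\gamma_i}i_{x_i}^*\alpha$, the last step because $\gamma_i$ factors through the inclusion $i_{x_i}\colon F_{x_i}\hookrightarrow E$. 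Now property (2) of Statement~\ref{stat:1} says $i_{x_i}^*\alpha$ is closed with $[i_{x_i}^*\alpha]=H^1(\eta_{x_i})a$, so $\int_{\gamma_i}i_{x_i}^*\alpha=\langle[\gamma_i],H^1(\eta_{x_i})a\rangle$; applying the commutative diagram \eqref{eq:9} and the identity \eqref{eq:10}, this becomes $\langle\pi_1(\eta_{x_i})[\gamma_i],a\rangle=\langle ind_{x_i}(s),a\rangle=ind_{x_i}(s;a)$, by Definitions~\ref{df:1} and~\ref{df:2}. Summing over $i=\overline{1,k}$ yields \eqref{eq:33}.

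The only step needing genuine attention is the orientation bookkeeping in the equality $\int_{\partial S_i}q^*\alpha=\int_{\mathbb{S}^1}\gamma_i^*\alpha$: one has to verify that the orientation of $S$, the boundary orientation it induces on $\partial S_i$, the orientation of $\mathbb{S}^1$ transported through $u_i$, and the convention fixing $ind_{x_i}(s)\in\pi_1(F)$ (cf. Remark~\ref{rem:3}) are all mutually compatible, so that no sign $-1$ slips in. Fixing these conventions once and for all — say, requiring $\pi\circ q$ to be orientation-preserving on $\overset{\circ}{S}$ and each $u_i|_{\mathbb{S}^1\times\{1\}}$ orientation-preserving onto $\partial S_i$ — makes every contribution carry the correct sign, and this is essentially the only obstacle. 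Everything else (naturality of $d$ under pullback, finiteness of the integrals since $S$ is compact, and independence of $\langle[\gamma_i],\cdot\rangle$ of the chosen representative of $[\gamma_i]$) is routine.
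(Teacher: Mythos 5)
Your proposal is correct and follows essentially the same route as the paper: apply Stokes' theorem to $q^*\alpha$ on $S$, identify $d(q^*\alpha)=q^*\theta_{(1,1)}+q^*\theta_{(2,0)}$ via Statement~\ref{stat:5}, and recognize each boundary integral $\int_{\partial S_i}q^*\alpha=\int_{\gamma_i}\alpha$ as $ind_{x_i}(s;a)$ via Definition~\ref{df:2} and Remark~\ref{rem:4}. Your extra attention to the orientation conventions is a reasonable refinement that the paper leaves implicit.
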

\begin{proof}
By the Stokes theorem,
\begin{equation}
\int_S d q^* \alpha = \int_{\partial S} q^*\alpha = \sum\limits_{i=1}^k \int_{\partial S_i} q^*\alpha.
\label{eq:34}
\end{equation}
We have that
\begin{equation}
\int_{\partial S_i} q^*\alpha = \int_{\mathbb{S}^1 \times \{ 1 \}} u_i^* q^* \alpha = \int_{\mathbb{S}^1} \gamma_i^* \alpha = \int_{\gamma_i} \alpha = ind_{x_i}(s; a),
\label{eq:35}
\end{equation}
Therefore,
\begin{equation}
\sum\limits_{i=1}^k \int_{\partial S_i} q^*\alpha = ind_{x_i}(s; a).
\label{eq:36}
\end{equation}
On the other hand, by Statement~\ref{stat:5},
\begin{equation}
\int_S d q^* \alpha = \int_S q^* d\alpha = \int_S q^*\theta_{(1, 1)} + q^*\theta_{(2, 0)}.
\label{eq:37}
\end{equation}
and thus we get the required statement.
\end{proof}

\section{Examples}
\label{sec:4}

\subsection{Classical Hopf theorem}
\label{subsec:4_1}
Let $(M, g)$ be a compact oriented two-dimensional Riemannian manifold, and $E = T^1(M, g)$ the bundle of unit vectors.
Then $E$ is a local trivial bundle with structure group $SO(2)$ and typical fiber $\mathbb{S}^1$.
Any $V \in \mathfrak{X}(M)$ defines a section $s = \frac{1}{\|V\|}V : M \setminus \Sigma \to E$, where $\Sigma$ is the set of zeros of $V$.
Assume that the set $\Sigma$ is discrete.
We will consider trivializations of the bundle $E$ given by local orthonormal frames $\left\{ e_1, e_2 \right\}$.
For each $x_i \in \Sigma$ let us consider a orientation preserving diffeomorphism $c_i : (D, 0) \to (U_i, x_i)$, where $D \subset \mathbb{R}^2$ is a disk centered at the origin $0$.
Let $\left\{ e_a \right\}$ be an orthonormal frame field over $U_i$ which determines a diffeomorphism $\pi^{-1}(U_i) \cong U_i \times \mathbb{S}^1$.
Now let consider a map
\begin{equation}
t_i : \mathbb{S}^1 \times [0, 1] \to U_i, \quad (\phi, \rho) \mapsto c_i( (1-\rho)\cos\phi, (1-\rho)\sin\phi) ),
\label{eq:38}
\end{equation}
and let $\psi_i : U_i\setminus\left\{ x_i \right\} \to \mathbb{S}^1$ be the local representation of the section $s$ with respect to the given trivialization, this means that
\begin{equation}
s = \cos\psi_i e_1 + \sin\psi_i e_2.
\label{eq:39}
\end{equation}
Now $t_i$ induces a diffeomorphism $\mathbb{S}^1 \times [0, 1) \to U_i \setminus \left\{ x_i \right\}$ and therefore we get a map $\psi_i \circ t_i^{-1} : \mathbb{S}^1 \times [0, 1) \to \mathbb{S}^1$.
Now assume that the map $\psi_i \circ t_i^{-1}$ can be prolonged to a map $\Psi_i: \mathbb{S}^1 \times [0, 1] \to \mathbb{S}^1$.
Under this assumption we can construct the desired compact $2$-dimensional manifold $S$ and the map $q : S \to E$ with properties (1) and (2) (see Section~\ref{sec:2}).
\begin{rem}
In geometric terms, our assumption means that the angle $\psi$ of the unit vector field $s$ with the vector field $e_1$ (see~\eqref{eq:39}) has limit when we approach to the point $x_i$ by some curve, but the limit depends on the curve.
\label{rem:6}
\end{rem}
In order to construct $S$, first we take $S' = M \setminus \cup_{i=1}^k U_i$, this is a submanifold with boundary which consists of the curves
$\Gamma_i = \partial U_i = c_i (\partial D)$, then
consider the disjoint union $S''$ of $k$ copies of the cylinder $\mathbb{S}^1 \times [0, 1] \cong \partial D \times [0, 1]$, and attach $S''$ to $S'$ by the map
$C : \sqcup_{i = 1}^k \partial D \times {0} \to \sqcup_{i = 1}^k \Gamma_i$ determined by the maps
\begin{equation}
\left. c_i \right|_{\partial D} : \mathbb{S}^1 \times \left\{ 0 \right\} \cong \partial D \times \left\{ 0 \right\} \to \Gamma_i.
\label{eq:40}
\end{equation}
Thus we get the manifold $S = S' \cup_C S''$.
Now the map $q : S \to E$ is defined as follows: for each $p \in S'$, we set $q(p) = s(p)$, and for each $p \in S'' = \sqcup_{i = 1}^k \partial D \times [0, 1]$, if $p$ belongs to $i$-th copy of $\mathbb{S}^1 \times [0, 1] \cong \partial D \times [0, 1]$, we set $q(p) = \Psi_i(p)$.
From the construction it is clear that $q$ is well defined on $S' \cap S'' = \sqcup_{i=1, k} \Gamma_i$.
Let $a \in H^1(\mathbb{S}^1)$ be the class represented by the angle form $\theta$.
The Riemannian connection $\nabla$ of the metric $g$ defines a connection $H$ in the bundle $E \to M$.
Let $\alpha \in \Omega^1(E)$ be the $1$-form constructed in Statement~\ref{stat:1}.
In terms of the trivialization
\begin{equation}
U \times \mathbb{S}^1 \to \pi^{-1}(U), \quad (q, \psi) \mapsto \cos\psi e_1(q) + \sin\psi e_2(q).
\label{eq:41}
\end{equation}
where $U \subset M$ and $\left\{ e_1, e_2 \right\}$ is an orthonormal frame field, the form $\alpha$ is written as follows:
\begin{equation}
\alpha = \theta + \pi^* G,
\label{eq:42}
\end{equation}
where $\nabla e_1 = G e_2$ and $\nabla e_2 = -G e_1$, and $G = G^2_{11} e^1 + G^2_{12} e^2$, $G^k_{ij}$ are the connection coefficients of $G$ with respect to the orthonormal frame $\left\{ e_1, e_2 \right\}$.
\begin{rem}
If $W = \cos\psi e_1 + \sin\psi e_2$, then $\nabla W = (d\psi + G)(-\sin\psi e_1 + \cos\psi e_2) = (d\psi + G) W^\perp$, therefore $W(t) = (\gamma(t), \psi(t))$ is parallel if and only if $\frac{d}{dt}\psi + G(\frac{d}{dt}\gamma) = 0$.
This means that, in terms of the trivialization, the connection form is exactly the form $\alpha$ given by~\eqref{eq:42}.
Therefore $\alpha$ vanishes over $H$ (condition (1) in Statement~\ref{stat:1}).
At the same $\alpha$ restricted to a fiber is $\theta$, this means satisfies condition (2) in Statement~\ref{stat:1}.
\label{rem:7}
\end{rem}
Denote by $\Psi^a_i : \mathbb{S}^1 \to \mathbb{S}^1$ the map $\Psi_i : \mathbb{S}^1 \times [0, 1] \to \mathbb{S}^1$ restricted to $\mathbb{S}^1 \times \left\{ a \right\}$.
Then $\Psi^0_i$ is homotopic to $\Psi^1_i$, therefore $\deg \Psi^0_i = \deg \Psi^1_i$.
However, $\Psi^0_i = \psi_i \circ t_i$ restricted to $\mathbb{S}^1 \times \left\{ 0 \right\}$ and its degree coincides with the degree of $\psi_i$ restricted to $\Gamma_i = \partial U_i$, therefore
the \emph{$\deg\Psi^0_i$ is equal to the classical index $Ind_{x_i}(s)$ of the vector field $s$ at the point $x_i$}.
Therefore,
\begin{equation}
ind_{x_i}(s, a) = \int_{\gamma_i} \alpha = \int_{\mathbb{S}^1} (\Psi^1_i)^* \theta = \deg \Psi^1_i = \deg \Psi^0_i = Ind_{x_i}(s).
\label{eq:43}
\end{equation}
This means that \emph{the classical index $Ind_{x_i}(s)$ of vector field $s$ at the singular point $x_i$ coincides with the index $ind_{x_i}(s, a)$}.
On the other hand $d\alpha = \pi^* dG = \pi^* K e^1 \wedge e^2$, where $K$ is the curvature of the metric $g$.
Therefore, in Statement~\ref{stat:5} $\theta_{(1, 1)} = 0$ and $\alpha(\Omega) = \pi^*(K e^1 \wedge e^2)$.
Thus, \eqref{eq:43} gives us
\begin{equation}
\int_S q^*\pi^*(K e^1 \wedge e^2) = \sum_{i = \overline{1, k}} Ind_{x_i}(s).
\label{eq:44}
\end{equation}
At the same time, by the construction, $\pi \circ q : S \setminus \partial S \to M \setminus \Sigma$ is a diffeomorphism, $\Sigma$ is a set of measure zero, and $K e^1 \wedge e^2$ is a $2$-form defined globally on $M$, therefore
\begin{equation}
\int_S q^*\pi^*(K e^1 \wedge e^2) = \int_M K e^1 \wedge e^2
\label{eq:45}
\end{equation}
and we get the classical result
\begin{equation}
\int_M K e^1 \wedge e^2 = \sum_{i = \overline{1, k}} Ind_{x_i}(s).
\label{eq:46}
\end{equation}

\subsection{Example: Projective bundles}
\label{subsec:4_2}
\subsubsection{Projective bundles}
\label{subsubsec:4_2_1}
\begin{df}
A \emph{projective bundle} is a locally trivial bundle $Q \to M$ with typical fiber $\mathbb{R}P^n$ and structure group $PGL(n)$.
The number $n$ is called the \emph{rank} of $Q$.
\label{df:3}
\end{df}
\begin{ex}
Let $E \to M$ be a vector bundle.
Then the bundle $P(E)$ of $1$-dimensional subspaces of the fibers of $E$ is called the \emph{projectivization} of $E$ and it is projective bundle.
\label{ex:4}
\end{ex}
\subsubsection{Projective bundle as projectivization of a vector bundle}
\label{subsubsec:4_2_2}
If we have a projective bundle $Q \to M$, then can we find a vector bundle $E$ such that $Q = P(E)$?
Note that for $A \in GL(n + 1)$, we have $\det (\lambda A) = \lambda^{n + 1} \det A$.
Now for an even $n$, the group $PGL(n) = GL(n + 1)/{A \sim \lambda A}$, $\lambda \in \mathbb{R}$, is isomorphic to $SL(n + 1)$ because for each equivalence class $[A] \in PGL(n)$ one can find a representative $B = (\det A)^{-1/(n + 1)} A \in [ A ]$ such that $\det B = 1$.
This $B$ is unique because for $B, B' \in [A]$ such that $\det B = \det B' = 1$, we have that $B' = \lambda B$ and $\lambda^{n + 1} = 1$, this implies that $\lambda = 1$.
Therefore in this case $PGL(n) \cong SL(n + 1)$.
For an odd $n$, the group $PGL(n) = PGL_{+}(n) \sqcup PGL_{-}(n)$, where
\begin{equation}
PGL_{+}(n) = \left\{ [ A ] \mid \det A > 0 \right\},
\quad
PGL_{-}(n) = \left\{ [ A ] \mid \det A < 0 \right\}.
\label{eq:47}
\end{equation}
By the same arguments as for case of even $n$ we get that $PGL_{+}(n) \cong SL(n)/{\pm I}$.
Note that $PGL_{-}(n)$ is diffeomorphic to $PGL_{+}(n)$.
If the structure group $PGL(n)$ of a projective bundle $Q$ reduces to $PGL_{+}(n)$, then $Q$ is called \emph{orientable}, on the contrary $Q$ is called \emph{no orientable}.
\begin{ex}
If a vector bundle $E$ is no orientable, then $P(E)$ is no orientable.
\label{ex:5}
\end{ex}
Now let consider the following situation.
Let $\widetilde{G}$ be a Lie group, $Z = Z(\widetilde{G})$ be the center of $\widetilde{G}$, and $G = \widetilde{G}/Z$.
So we have the exact sequence
\begin{equation}
\xymatrix{0 \ar[r] & Z \ar[r]^i &\widetilde{G} \ar[r]^p & G \ar[r] & 0.}
\label{eq:48}
\end{equation}
Let $P \to M$ be a $G$-principal bundle, $\mathcal{U} = \left\{ U_i \right\}$ be a good covering of a manifold $M$, and
$g_{ij} : U_{ij} \to M$ be the transition functions of $M$.
As the covering $\mathcal{U}$ is good, we can lift $g_{ij}$ up to $\widetilde{g}_{ij} : U_{ij} \to \widetilde{G}$ of $g_{ij}$, this means that the following diagram is commutative
\begin{equation}
\xymatrix{%
& \widetilde{G} \ar[d]_p
\\
U_{ij} \ar[ur]^{\widetilde{g}_{ij}} \ar[r]_{g_{ij}} & G%
}
\label{eq:49}
\end{equation}
Consider the Cech cochain $z = \left\{ z_{ijk} : U_{ijk} \to \widetilde{G} \right\}$ over the covering $\mathcal{U}$, where
\begin{equation}
z_{ijk} = \widetilde{g}_{ij} \widetilde{g}_{jk} \widetilde{g}_{ki}.
\label{eq:50}
\end{equation}
As $p(z_{ijk}) = g_{ij} g_{jk} g_{ki} = 1$, the functions $z_{ijk}$ take values in $Z$, therefore in fact $z \in \check{C}(\mathcal{U}; Z)$.
\begin{stat}
\begin{enumerate}
\item
The \v Cech cochain $z$ is a normalized cocycle.
\item
The \v Cech cohomology class $[ z ] \in \check{H}^2(M; Z)$ is trivial if and only if
there exists a $\widetilde{G}$-principal bundle $\widetilde{P}$ such that $P \cong \widetilde{P}/Z$.
\end{enumerate}
\label{stat:6}
\end{stat}
\begin{proof}
Let us prove that $\left\{ z_{ijk} \right\}$ is a \v Chech cocycle on $\mathcal{U}$.
First of all note that we have
\begin{equation}
\widetilde{g}_{ij} \widetilde{g}_{jk} = z_{ijk} \widetilde{g}_{ik} = \widetilde{g}_{ik} z_{ijk}.
\label{eq:51}
\end{equation}
Then
\begin{equation}
(\widetilde{g}_{ij} \widetilde{g}_{jk}) \widetilde{g}_{ki} = z_{ijk} \widetilde{g}_{ik} \widetilde{g}_{ki} = z_{ijk} \text{ and } \widetilde{g}_{ij} (\widetilde{g}_{jk} \widetilde{g}_{ki}) = \widetilde{g}_{ij} \widetilde{g}_{ji} z_{jki} = z_{jki},
\label{eq:52}
\end{equation}
therefore $z_{ijk} = z_{kij} = z_{jki}$.
At the same time,
\begin{equation}
z_{ijk}^{-1} \widetilde{g}_{ik}^{-1} = \left( z_{ijk} \widetilde{g}_{ik} \right)^{-1} = \left( \widetilde{g}_{ij} \widetilde{g}_{jk} \right)^{-1} = \widetilde{g}_{kj} \widetilde{g}_{ji} = z_{kji} \widetilde{g}_{ki},
\label{eq:53}
\end{equation}
hence follows that $z_{ijk}^{-1} = z_{kji} = z_{jik}$.
Moreover $z_{ijk} = 1$ if at least two indices coincide.
Thus $z = \left\{ z_{ijk} \right\}$ is a normalized cochain.
Now we have
\begin{equation}
(\widetilde{g}_{ij} \widetilde{g}_{jk}) (\widetilde{g}_{kj} \widetilde{g}_{jl}) = z_{ijk} \widetilde{g}_{ik} z_{kjl} \widetilde{g}_{kl} = z_{ijk} z_{kjl} z_{ikl} g_{il}
\label{eq:54}
\end{equation}
and
\begin{equation}
(\widetilde{g}_{ij} \widetilde{g}_{jk}) (\widetilde{g}_{kj} \widetilde{g}_{jl}) = (\widetilde{g}_{ij} (\widetilde{g}_{jk} (\widetilde{g}_{kj}) \widetilde{g}_{jl} = \widetilde{g}_{ij} \widetilde{g}_{jl} = z_{ijl} \widetilde{g}_{il},
\label{eq:55}
\end{equation}
thus $z_{ijk} z_{kjl} z_{ikl} = z_{ijl}$, this implies
\begin{equation}
\delta z = z_{jkl} z_{ikl}^{-1} z_{ijl} z_{ijk}^{-1} = 0.
\label{eq:56}
\end{equation}
If $z = \delta u$, where $u = \left\{ u_{ij} \in Z \right\}$, then
\begin{equation}
z_{ijk} = u_{ij} u_{jk} u_{ki}.
\label{eq:57}
\end{equation}
Therefore we can construct the $\widetilde{G}$-principal bundle $\widetilde{P} \to M$ with the transition functions $\widetilde{g}'_{ij} = \widetilde{g}_{ij} u_{ji}$ because
\begin{equation}
\widetilde{g}'_{ij} \widetilde{g}'_{jk} \widetilde{g}'_{ki} =\widetilde{g}_{ij} u_{ji} \widetilde{g}_{jk} u_{kj} \widetilde{g}_{ki} u_{ik} =\widetilde{g}_{ij} \widetilde{g}_{jk} \widetilde{g}_{ki} u_{ji} u_{kj} u_{ik} =
z_{ijk} u_{ji} u_{kj} u_{ik} = 1.
\label{eq:58}
\end{equation}
Now we have that $p(\widetilde{g}'_{ij}) = g_{ij}$ and from the following lemma it follows that there exists a principal bundle morphism $\pi : \widetilde{P} \to P$ written with respect to the trivializations as $(x, \widetilde{g}) \to (x, p(\widetilde{g}))$.
\begin{lm}
Let $G_a$, $a = 1, 2$ be Lie groups.
Let $\pi_a : P_a \to M$, $a = 1, 2$ be a $G_a$-principal bundle and $f : G_1 \to G_2$ be a Lie group homomorphism.
Assume that $\mathcal{U} = \left\{ U_i \right\}$ is a trivializing covering for $P_1$ and $P_2$ at the same time.
If the transition functions $g^a_{ij}$ with respect to the covering $\mathcal{U}$ are related by $f(g^1_{ij}) = g^2_{ij}$, then there exists a principal fiber bundle homomorphism $F : P_1 \to P_2$ corresponding to $f$, i.\, e. $F(p_1 g_1) = F(p_1) f(g_1)$.
\label{lm:1}
\end{lm}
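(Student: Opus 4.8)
The plan is to build $F$ from the local trivializations and then check that the pieces agree on overlaps. Write $\Phi^a_i : \pi_a^{-1}(U_i) \to U_i \times G_a$ for the trivializations, normalized so that the transition functions satisfy $\Phi^a_i \circ (\Phi^a_j)^{-1}(x,g) = (x, g^a_{ij}(x)\,g)$ for $x \in U_{ij}$. Over each $U_i$ I would set
\[
F_i := (\Phi^2_i)^{-1} \circ (\mathrm{id}_{U_i} \times f) \circ \Phi^1_i : \pi_1^{-1}(U_i) \to \pi_2^{-1}(U_i).
\]
Each $F_i$ is smooth, being a composition of smooth maps; it covers $\mathrm{id}_{U_i}$; and it is equivariant, since in the trivializations the right $G_a$-action is right multiplication and $f$ is a homomorphism, so $F_i(p\,g) = F_i(p)\,f(g)$ follows by a one-line computation.

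The main step is to verify $F_i = F_j$ on $\pi_1^{-1}(U_{ij})$, which is what lets the $F_i$ glue into a global map. Given $p \in \pi_1^{-1}(U_{ij})$ with $\Phi^1_j(p) = (x,g)$, one has $\Phi^1_i(p) = (x, g^1_{ij}(x)\,g)$, and hence
\[
\Phi^2_i(F_i(p)) = (x,\, f(g^1_{ij}(x)\,g)) = (x,\, f(g^1_{ij}(x))\,f(g)) = (x,\, g^2_{ij}(x)\,f(g)),
\]
using the hypothesis $f(g^1_{ij}) = g^2_{ij}$. On the other hand $\Phi^2_j(F_j(p)) = (x, f(g))$, so the transition relation for $P_2$ gives $\Phi^2_i(F_j(p)) = (x,\, g^2_{ij}(x)\,f(g))$ as well; therefore $F_i(p) = F_j(p)$.

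Finally I would assemble the global map $F : P_1 \to P_2$ from the $F_i$; it is smooth (locally equal to the smooth maps $F_i$), satisfies $\pi_2 \circ F = \pi_1$, and satisfies $F(p\,g) = F(p)\,f(g)$ — i.e.\ it is a principal bundle homomorphism over $\mathrm{id}_M$ corresponding to $f$, as claimed. The only real obstacle is bookkeeping: one must use the convention for the transition functions (order of indices, and left- versus right-multiplication) consistently throughout the overlap computation; note that the hypothesis $f(g^1_{ij}) = g^2_{ij}$ is automatically compatible with the cocycle conditions on both sides because $f$ is a homomorphism, so no further compatibility needs to be imposed.
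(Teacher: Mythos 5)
Your proposal is correct and follows essentially the same route as the paper: define $F_i$ over each $U_i$ by conjugating $\mathrm{id}\times f$ with the two trivializations, verify equivariance from the homomorphism property of $f$, and check agreement on overlaps using $f(g^1_{ij}) = g^2_{ij}$. The only difference is cosmetic (your index convention $g_{ij}$ versus the paper's $g_{ji}$ in the overlap computation), so there is nothing to add.
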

\begin{proof}
We define $F_i : \pi_1^{-1}(U_i) \to \pi_2^{-1}(U_i)$ by the commutative diagram
\begin{equation}
\xymatrix{%
\pi_1^{-1}(U_i) \ar[r]^{F_i} \ar[d]_{\psi_i^1} & \pi_2^{-1}(U_i) \ar[d]^{\psi_i^2}
\\
U_i \times G_1 \ar[r]^{ 1 \times f} & U_i \times G_2%
}
\label{eq:59}
\end{equation}
where $\psi_a$ are the trivializations which are principal bundle isomorphisms, therefore $\psi_a(p_a g_a) = \psi_a(p_a) g_a$, where the action of $G_a$ on $U_i \times G_a$ is given by $(x, g_a) g'_a = (x, g_a g'_a)$.
From this follows that $F_i(p_1 g_1) = F_i(p_1) g_1$.
On the other hand, for $p_1 \in \pi^{-1}(U_{ij})$ we have $F_i(p_1) = F_j(p_1)$.
Indeed, we have the commutative diagram for $a = 1, 2$:
\begin{equation}
\xymatrix{%
U_{ij} \times G_a{}\ar[rr]^{(x, g_a) \to (x, g^a_{ji} g_a)} & & U_{ij} \times G_a
\\
&\pi_1^{-1}(U_{ij}) \ar[lu]^{\psi^a_i} \ar[ru]_{\psi^a_j} &%
}
\label{eq:60}
\end{equation}
Let $p_1 \in \pi^{-1}(U_{ij})$ and $\psi^1_i(p_1) = (x, g_1)$, then $\psi^1_j(p_1) = (x, g^1_{ji} g_1)$.
Also $\psi^2_i F_i(p_1) = (x, f(g_1))$, hence $\psi^2_j F_i(p_1) = (x, g^2_{ji} f(g_1))$.
Therefore,
\begin{equation}
\psi^2_j F_j(p_1) = (x, f(g^1_{ji} g_1)) = (x, f(g^1_{ji}) f(g_1))
= (x, g^2_{ji} f(g_1)) = \psi^2_j F_i(p_1),
\label{eq:61}
\end{equation}
thus $F_j = F_i$ on $\pi^{-1}(U_{ij})$ and therefore $\left\{ F_i \right\}$ determine a homomorphism $F : P_1 \to P_2$.
\end{proof}
From the construction of the bundle $\widetilde{P}$ it follows that $P = \widetilde{P}/Z$ and the principal fiber bundle homomorphism constructed in Lemma~\ref{lm:2} is exactly the projection $\widetilde{P} \to P=\widetilde{P}/Z$.
\end{proof}
Let $Q$ be a projective bundle of rank $n$, $n$ is odd.
Assume that $Q$ is orientable.
Let $\left\{ U_i \right\}$ be a good covering of $M$, $U_{i_1 \ldots i_k} = U_{i_1} \bigcap \cdots \bigcap U_{i_k}$, and $g_{ij} : U_{ij} \to PGL(n)$ the corresponding cocycle of transition functions.
As $U_{ij}$ are contractible, we can choose the maps $\hat{g}_{ij} : U_{ij} \to SL(n + 1)$ such that $p(\hat{g}_{ij}) = g_{ij}$, where $p : SL(n + 1) \to PGL_{+}(n)$ is the canonical projection.
Then $p(\hat{g}_{ij} \hat{g}_{jk} \hat{g}_{ki}) = g_{ij} g_{jk} g_{ki} = id \in PGL(n)$, therefore $e_{ijk} = \hat{g}_{ij} \hat{g}_{jk} \hat{g}_{ki}$ takes values in $\mathbb{Z}_2 = \left\{ \pm I \right\}$.
Note that $\mathbb{Z}_2 = \left\{ \pm I \right\}$ is the center of the group $SL(n + 1)$.
\begin{cor}
If $Q \to M$ is a projective bundle of odd rank $n$ and is oriented, then the obstruction to existence of a vector bundle $E$ such that $Q = P(E)$ is an element in $H^2(M, \mathbb{Z}_2)$.
\label{cor:2}
\end{cor}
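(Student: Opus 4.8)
The plan is to recognize the cochain $e = \{ e_{ijk} \}$ appearing just above the statement as precisely the obstruction cocycle furnished by Statement~\ref{stat:6} for the central extension $0 \to \mathbb{Z}_2 \to SL(n+1) \to PGL_{+}(n) \to 0$, in which $\mathbb{Z}_2 = \{ \pm I \}$ is genuinely the center of $SL(n+1)$ since $n+1$ is even, and then to rephrase the conclusion of Statement~\ref{stat:6} in terms of projectivizations of vector bundles. Because $Q$ is an orientable projective bundle of rank $n$, its structure group reduces to $PGL_{+}(n)$, so $Q$ is associated with a $PGL_{+}(n)$-principal bundle $P$, i.e.\ $Q = P \times_{PGL_{+}(n)} \mathbb{R}P^{n}$, and the maps $g_{ij}$ are exactly the transition functions of $P$.

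First I would verify that the \v{C}ech class $[e] \in \check{H}^{2}(M; \mathbb{Z}_2)$ does not depend on the chosen lifts $\hat{g}_{ij} : U_{ij} \to SL(n+1)$: any two such systems differ by a $\mathbb{Z}_2$-valued $1$-cochain $\{ u_{ij} \}$, which replaces $e$ by $e \cdot \delta u$; hence $[e]$ is a well-defined invariant of $Q$. Statement~\ref{stat:6}(1) says $e$ is a normalized cocycle, and Statement~\ref{stat:6}(2) says $[e] = 0$ if and only if there is an $SL(n+1)$-principal bundle $\widetilde{P}$ with $P \cong \widetilde{P}/\mathbb{Z}_2$.

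Next I would establish that the existence of such a $\widetilde{P}$ is equivalent to the existence of a vector bundle $E$ with $Q = P(E)$. For one direction, given $\widetilde{P}$, put $E = \widetilde{P} \times_{SL(n+1)} \mathbb{R}^{n+1}$; the quotient homomorphism $SL(n+1) \to PGL_{+}(n)$ sends the frame bundle of $E$ onto the $PGL_{+}(n)$-bundle underlying $P(E)$, which by construction is $\widetilde{P}/\mathbb{Z}_2 \cong P$, so $P(E) \cong Q$. Conversely, suppose $Q = P(E)$; since $n$ is odd, $\mathrm{sgn}(\det h)$ is well defined on a class $[h] \in PGL(n)$, so orientability of $Q$ forces the transition functions $h_{ij}$ of $E$ to satisfy $\det h_{ij} > 0$ (this is the converse of Example~\ref{ex:5}: $E$ is orientable). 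Then $\hat{h}_{ij} = (\det h_{ij})^{-1/(n+1)}\, h_{ij}$ is a well-defined smooth $SL(n+1)$-valued cocycle lifting $g_{ij}$, and $\hat{h}_{ij}\hat{h}_{jk}\hat{h}_{ki} = (\det( h_{ij} h_{jk} h_{ki}))^{-1/(n+1)} I = I$; so for this lift the obstruction cocycle is identically trivial, and by the independence established above $[e] = 0$. Chaining the two equivalences shows that $[e] \in H^{2}(M; \mathbb{Z}_2)$ is precisely the obstruction to realizing $Q$ as the projectivization $P(E)$ of a vector bundle $E$.

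I expect the main difficulty to be the forward direction of the last equivalence, more precisely the reduction of the structure group of $E$ from $GL(n+1)$ to $SL(n+1)$: one has to use that for odd rank the orientability of the projectivization forces $\det h_{ij} > 0$, so that the real $(n+1)$-th root is available and varies smoothly, and one must check that rescaling each $h_{ij}$ by a scalar function affects neither the cocycle identity nor the bundle $P(E)$. Granting this reduction, the remainder is a direct invocation of Statement~\ref{stat:6} together with standard bookkeeping of associated and quotient bundles.
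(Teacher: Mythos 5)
Your proposal is correct and follows essentially the same route as the paper: the corollary is an immediate consequence of the paragraph preceding it together with Statement~\ref{stat:6}, applied to the central extension $0\to\mathbb{Z}_2\to SL(n+1)\to PGL_{+}(n)\to 0$ (with $\mathbb{Z}_2=\{\pm I\}$ the full center of $SL(n+1)$ since $n+1$ is even). The details you supply beyond the paper --- independence of $[e]$ from the choice of lifts, and the equivalence between the existence of the $SL(n+1)$-bundle $\widetilde{P}$ and of a vector bundle $E$ with $P(E)\cong Q$ (via orientability of $E$ and the rescaling $(\det h_{ij})^{-1/(n+1)}h_{ij}$) --- are exactly the steps the paper leaves implicit, and they are carried out correctly.
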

\begin{df}
Let $Q \to M$ be a projective bundle of rank $n$.
The principal bundle $PGL(Q)$ with the group $PGL(n)$ adjoint to $Q$ is called the \emph{bundle of projective frames}.
\label{df:4}
\end{df}
\begin{ex}
On the total space of a vector bundle $E \to M$ the group $\mathbb{R}^{*} = \mathbb{R} \setminus \left\{ 0 \right\}$ acts freely by scaling, and $Q = P(E)$ is the quotient of $E$ with respect to this action.
Therefore, we have the $\mathbb{R}^{*}$-principal bundle $\pi : E \to Q$.
Also we have the induced action of $\mathbb{R}^{*}$ on the total space of the linear frame bundle $GL(E)$, and the quotient space is $PGL(Q)$.
The quotient map $GL(E) \to PGL(Q)$ is $\mathbb{R}^{*}$-principal bundle.
If $E$ is an orientable vector bundle of rank $n + 1$, where $n$ is odd, then the structure group $GL(n + 1)$ of $GL(E)$ reduces to $SL(n + 1)$, and in this case we have the double covering $SL(E) \to PGL(Q)$.
For an orientable vector bundle $E$ of rank $n + 1$, one can take a metric $g$ on $E$, therefore the $GL(n + 1)$-principal bundle $GL(E)$ of linear frames reduces to a $SO(n + 1)$-principal bundle of positively oriented orthonormal frames $SO(E)$.
Therefore, for $Q = P(E)$, the bundle $PGL(Q)$ reduces to the subgroup $SO(n + 1)/{\pm I} \subset PGL(n) = SL(n + 1)/{\pm I}$.
\label{ex:6}
\end{ex}

\subsubsection{Projective connections}
\label{subsubsec:4_2_3}
Let $Q \to M$ be a projective bundle.
A projective connection is a connection in $PGL(Q)$.
Any linear connection in $E$ induces a projective connection in $P(E)$.
Indeed, the parallel transport preserves one-dimensional subspaces in fibers of $E$.
At the same time the canonical projection $GL(E) \to PGL(P(E))$ is a principal bundle morphism corresponding to the morphism of groups $GL(n + 1) \to PGL(n)$.
Therefore, any connection in $GL(E)$ determines a unique connection in $PGL(E)$~\cite{kobayashi_nomizu1963}(Chapter II, Proposition 1).
\begin{stat}
Two linear connections $\nabla$ and $\nabla'$ in the vector bundle $E$ induce the same projective connection in $P(E)$ if and only if the deformation tensor $T = \nabla' - \nabla$ has the form
\begin{equation}
T_i{}^a_b = \xi_i \delta^a_b.
\label{eq:62}
\end{equation}
\label{stat:7}
\end{stat}
If $Q = P(E)$, where $E$ is an oriented vector bundle, then for any projective connection in $Q$ we can find an equi-affine connection $\nabla$ in $E$ with respect to a given volume form $\theta \in \Omega^{n + 1}E$, i.\,e. $\nabla \theta = 0$, such that $\nabla$ induces the given connection in $Q$.
Indeed in this case $SL(E, \theta)$ covers $PGL(Q) = SL(E, \theta)/\mathbb{Z}_2$, $\mathbb{Z}_2 = {\pm I}$, and any connection in $PGL(P(E))$ lifts to a connection in $SL(E, \theta)$.
\begin{stat}
Let $E$ be an oriented vector bundle, and $\theta \in \Omega^{n + 1}E$ be a volume form in $E$.
Then there is one-to-one correspondence between equi-affine connections on $E$ with respect to $\theta$ and the projective connections in $P(E)$.
\label{stat:8}
\end{stat}
\begin{rem}
In terms of parallel translation this correspondence can be explained as follows.
Let $\Gamma$ be a projective connection in $Q = P(E)$ induced by a linear connection $\nabla$ in $E$.
In terms of the parallel translation this means that a section $L$ of $Q$ along $\gamma$ is parallel with respect to the connection $\Gamma$ if there exists a section $s$ of $E$ along $\gamma$ which is parallel with respect to $\nabla$ and generates $L$.
This can be expressed also in terms of covariant derivative: $L = [ s ]$ is parallel with respect to $\Gamma$ if and only if $\nabla_{\dot\gamma} s = \mu s$.
Now any projective frame field $\mathcal{L} = \left\{ L_0, L_1, \cdots, L_n \right\}$ in $Q = P(E)$ determines a frame field $\mathcal{E} = \left\{E_1, \cdots, E_n\right\}$ in $E$ such that $L_i = [ E_i ]$, and $L_0 = [ E_0 ]$, where $E_0 = E_1 + E_2 + \cdots + E_n$, up to a scaling.
If in addition we have a volume form $\theta$, the projective frame field determines the linear frame field $\left\{ E_i \right\}$ up to a sign.
Then $\nabla_{\dot\gamma} E_i = \mu_i E_i$ and $\nabla_{\dot\gamma} E_0 = \mu_0 E_0$, hence follows that $\mu_0 = \mu_1 = \cdots = \mu_n$.
If we assume that the linear connection $\nabla$ is equi-affine with respect to a volume form $\theta$, then $\nabla \theta = 0$ and the function $\mu_0 = \frac{d}{dt} \log \theta(E_1, \cdots, E_n)$.
\label{rem:8}
\end{rem}
\subsection{Section of projective bundle and reduction of the bundle of projective frames}
\label{subsec:4_3}
\begin{stat}
Let $Q$ be a projective bundle of rank $1$.
Let $Aff(1) \subset PGL(1)$ be the subgroup of affine transformations of line.
Then $Q \cong PGL(Q)/Aff(1)$ and a section of projective bundle $Q$ is equivalent to the reduction of the bundle $PGL(Q)$ to the subgroup $Aff(1)$.
\label{stat:9}
\end{stat}
\begin{proof}
This follows from the fact that the group $PGL(1)$ acts transitively on $\mathbb{R}P^1$, and the isotropy subgroup of a point in $\mathbb{R}P^1$ is the group $Aff(1)$.
\end{proof}

\subsubsection{The structure equations of the form $\alpha$}
\label{subsubsec:4_3_1}
\subsubsection{The vertical form $\alpha$}
\label{subsubsec:4_3_2}
Let $E \to M$ be an oriented vector bundle and $rank E$ = 2.
Let $Q = P(E)$ be the corresponding projective bundle.
Also let us take a projective connection $\Gamma$ in $Q$.
The typical fiber of $Q$ is $\mathbb{R}P^1$.
Let us consider the canonical projection $j : \mathbb{R}^2\setminus \{0\} \to \mathbb{R}P^1$, $j(v^1, v^2) = [v^1 : v^2]$, and the corresponding double covering
$\bar \jmath : \mathbb{S}^1 \overset{\iota}{\hookrightarrow} \mathbb{R}^2\setminus \{0\} \overset{j}{\rightarrow} \mathbb{R}P^1$.
Let us consider the form
\begin{equation}
\theta = \frac{1}{2} \frac{1}{(v^1)^2 + (v^2)^2} \left(-v^2 dv^1 + v^1 dv^2\right) \in \Omega^1(\mathbb{R}^2 \setminus \left\{ 0 \right\}),
\label{eq:63}
\end{equation}
Then $\iota^*\theta$ is the angular form on $\mathbb{S}^1$ and there exists $\bar\alpha \in \Omega^1(\mathbb{R}P^1)$ such that
$j^*\bar\alpha = \theta$.
Then, as $\int_{\mathbb{S}^1} \iota^*\theta = 2\pi$, we have that $\int_{\mathbb{R}P^1} \bar\alpha = \pi$.
Therefore, $\bar\alpha$ is a generator of $H^1(\mathbb{R}P^1)$.
Now let us take a projective connection $H$ is $Q$ which is determined by an equi-affine connection $\hat H$ in $E$.
Now we can construct the form $\alpha$ in $\Omega^1(Q)$ such that $\left.\alpha\right|_H = 0$, $d i_x \alpha = 0$, and $i_x^*\alpha$ represents the class $[\bar \alpha] \in H^1(\mathbb{R}P^1)$.
Let us take a metric $g$ in $E$, the volume form $\theta$ associated with $g$, and let $\omega$ be the connection form on $E$, that is the projection onto the vertical subspace parallel to $\hat H$.
Now let us consider the $1$-form
\begin{equation}
\hat\alpha_v(X) = \frac{1}{2 g(v, v)}\theta(\omega(X), v) \in \Omega^1(E_0),
\label{eq:64}
\end{equation}
where $v \in E$, $E_0 = E \setminus 0(M)$, and $0$ is the zero section of $E$.
This form $\hat\alpha$ vanishes on $H$, and the metric $g$ allows us to choose the trivialization of the bundle $E$ such that the maps $i_x$ are isomorphisms of the euclidean vector spaces $(E_x, g_x)$ and $\mathbb{R}^n$ with the standard metric.
Therefore by $i_x$ the form $\hat\alpha$ restricted to the fibers is sent to the form~\eqref{eq:63}.
Now let us consider local coordinate system $(x^i, y^a)$ on $E$ such that $g = (y^1)^2 + (y^2)^2$, then $\theta = -y^2 dy^1 + y^1 dy^2$.
Consider the local coframe $dx^i$, $Dy^a = dy^a + \Gamma_i{ }^a_b y^b dx^i$.
\begin{lm}
\begin{equation}
d(Dy^a) = \Gamma_i{ }^a_b Dy^b \wedge dx^i + R_{ij}{ }^a_b y^b dx^i \wedge dx^j,
\label{eq:65}
\end{equation}
where $R_{ij}{ }^a_b = \partial_i \Gamma_j{ }^a_b - \partial_j \Gamma_i{ }^a_b + \Gamma_i{ }^a_c \Gamma_j{}^c_b - \Gamma_j{ }^a_c \Gamma_i{}^c_b$ is the curvature tensor of the linear connection in $E$.
\label{lm:2}
\end{lm}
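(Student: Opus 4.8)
The plan is to prove the identity by a direct computation in the coordinates $(x^i, y^a)$ on the total space of $E$. The two facts that make it work are that the connection coefficients $\Gamma_i{}^a_b$ are pulled back from the base, so that $d\Gamma_i{}^a_b = \partial_j \Gamma_i{}^a_b\, dx^j$, and that $d(dy^a) = 0$ (and $d(dx^i)=0$). With these, the first step simply computes
\begin{equation*}
d(Dy^a) = d\bigl(\Gamma_i{}^a_b\, y^b\, dx^i\bigr) = \partial_j \Gamma_i{}^a_b\, y^b\, dx^j \wedge dx^i + \Gamma_i{}^a_b\, dy^b \wedge dx^i .
\end{equation*}

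The key step is to eliminate the coordinate differentials $dy^b$ in favour of the forms $Dy^b$, by substituting $dy^b = Dy^b - \Gamma_j{}^b_c\, y^c\, dx^j$ into the second summand. This produces exactly the term $\Gamma_i{}^a_b\, Dy^b \wedge dx^i$ asserted in the lemma, together with a term $-\Gamma_i{}^a_b \Gamma_j{}^b_c\, y^c\, dx^j \wedge dx^i$ quadratic in $\Gamma$.

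It then remains to assemble the two $2$-forms built solely from the $dx^i$, namely $\partial_j \Gamma_i{}^a_b\, y^b\, dx^j \wedge dx^i$ and $-\Gamma_i{}^a_b \Gamma_j{}^b_c\, y^c\, dx^j \wedge dx^i$. Antisymmetrising each coefficient in the index pair $(i,j)$ — equivalently, reading off the coefficient of $dx^i\wedge dx^j$ for $i<j$ — and relabelling the contracted dummy index in the quadratic term from $c$ to $b$, one recognises precisely $(\partial_i \Gamma_j{}^a_b - \partial_j \Gamma_i{}^a_b + \Gamma_i{}^a_c \Gamma_j{}^c_b - \Gamma_j{}^a_c \Gamma_i{}^c_b)\, y^b\, dx^i \wedge dx^j = R_{ij}{}^a_b\, y^b\, dx^i \wedge dx^j$, which is the remaining term of the identity, completing the proof.

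I do not expect any genuine obstacle here: the only point requiring care is the index bookkeeping together with a consistent summation convention (summing over $i<j$ in the curvature term, rather than over all $i,j$, in which case a factor $\tfrac12$ would appear). As a cross-check, one may note that $\{dx^i, Dy^a\}$ is a coframe on $E$ adapted to the splitting $TE = H \oplus V$ induced by the linear connection, so that the computed identity is just the local coordinate form of the second structure equation $D\omega = \Omega$ of Statement~\ref{stat:4}, with $R_{ij}{}^a_b$ the components of the curvature form $\Omega$; this gives a coordinate-free route at the cost of unwinding the definitions of $D$ and $\Omega$ in the present setting.
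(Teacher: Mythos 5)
Your proposal is correct and follows essentially the same route as the paper: expand $d(Dy^a)=d(\Gamma_i{}^a_b y^b)\wedge dx^i$, substitute $dy^b=Dy^b-\Gamma_j{}^b_c y^c\,dx^j$, and relabel dummy indices to recognise the curvature term. Your remark about the summation convention (sum over $i<j$ versus a factor $\tfrac12$ over all $i,j$) is a point the paper passes over silently, so it is a worthwhile caveat to keep in mind.
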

\begin{proof}
We have
\begin{multline}
d(Dy^a) =d( dy^a + \Gamma_i{ }^a_b y^b dx^i ) =d\Gamma_i{ }^a_b y^b \wedge dx^i + \Gamma_i{ }^a_b dy^b \wedge dx^i =
\\
\partial_j\Gamma_i{ }^a_b y^b dx^j \wedge dx^i + \Gamma_i{ }^a_b (Dy^b - \Gamma_j{}^b_c y^c dx^j) \wedge dx^i =
\\
\Gamma_i{ }^a_b Dy^b \wedge dx^i +
\left(\partial_j\Gamma_i{ }^a_b y^b dx^j \wedge dx^i - \Gamma_i{ }^a_b \Gamma_j{}^b_c y^c dx^j \wedge dx^i \right) =
\\
\Gamma_i{ }^a_b Dy^b \wedge dx^i +
\left(\partial_j\Gamma_i{ }^a_b y^b dx^j \wedge dx^i + \Gamma_j{ }^a_b \Gamma_i{}^b_c y^c dx^j \wedge dx^i \right) =
\\
\Gamma_i{ }^a_b Dy^b \wedge dx^i + R_{ij}{ }^a_b y^b dx^i \wedge dx^j.
\label{eq:66}
\end{multline}
\end{proof}
The form $\hat{\alpha}$ is written with respect to this frame as follows:
\begin{equation}
\hat{\alpha} = \frac{1}{2}\frac{1}{(y^1)^2 + (y^2)^2} \left( -y^2 Dy^1 + y^1 Dy^2 \right).
\label{eq:67}
\end{equation}
Now let us assume that the connection is adapted to the metric $g$ in $E$, in this case
$\Gamma_i{}^a_b = -\Gamma_i{}^b_a$.
Then we have
\begin{equation}
d ((y^1)^2 + (y^2)^2 ) = 2 \left( y^1 Dy^1 + y^2 Dy^2 \right),
\label{eq:68}
\end{equation}
because
\begin{multline}
d ((y^1)^2 + (y^2)^2 ) =2 \left( y^1 dy^1 + y^2 dy^2 \right) =
\\
2 \left( y^1 (Dy^1 - \Gamma_i{}^1_2 y^2 dx^i) + y^2 (Dy^2 - \Gamma_i{}^2_1 y^1 dx^i) \right)
\\
= 2 \left( y^1 Dy^1 + y^2 Dy^2 \right).
\label{eq:69}
\end{multline}
Also we have that
\begin{equation}
\begin{array}{l}
dy^1 \wedge Dy^2 = Dy^1 \wedge Dy^2 - \Gamma_i{}^1_2 y^2 dx^i \wedge Dy^2,
\\[5pt]
y^1 d Dy^2 = \Gamma_i{}^2_1 y^1 Dy^1 \wedge dx^i + R_{ij}{ }^2_1 (y^1)^2 dx^i \wedge dx^j,
\\[5pt]
dy^2 \wedge Dy^1 = Dy^2 \wedge Dy^1 - \Gamma_i{}^2_1 y^1 dx^i \wedge Dy^1,
\\[5pt]
y^2 d Dy^1 = \Gamma_i{}^1_2 y^2 Dy^2 \wedge dx^i + R_{ij}{ }^1_2 (y^2)^2 dx^i \wedge dx^j,
\end{array}
\label{eq:70}
\end{equation}
Therefore,
\begin{multline}
d (y^1 Dy^2 - y^2 Dy^1) =
\\
\left( d y^1 \wedge Dy^2 + y^1 d D y^2 - dy^2 \wedge Dy^1 - y^2 d Dy^1 \right) =
\\
2 Dy^1 \wedge Dy^2 + R_{ij}{}^2_1 \left( (y^1)^2 + (y^2)^2 \right) dx^i \wedge dx^j.
\label{eq:71}
\end{multline}
Thus,
\begin{multline}
d \hat{\alpha} =
\\
\frac{1}{2} \left[ d\left(\frac{1}{(y^1)^2 + (y^2)^2}\right) \wedge \left( -y^2 Dy^1 + y^1 Dy^2 \right) + \frac{1}{(y^1)^2 + (y^2)^2} d\left( -y^2 Dy^1 + y^1 Dy^2 \right) \right] =
\\
- \frac{1}{((y^1)^2 +(y^2)^2)^2} \left( y^1 Dy^1 +y^2 Dy^2 \right) \wedge \left( -y^2 Dy^1 +y^1 Dy^2 \right) +
\\
+ \frac{1}{(y^1)^2 + (y^2)^2} Dy^1 \wedge Dy^2 + \frac{1}{2} R_{ij}{}^2_1 dx^i \wedge dx^j
\\
= \frac{1}{2} R_{ij}{}^2_1 dx^i \wedge dx^j.
\label{eq:72}
\end{multline}
Note that $\Omega = \pi^*(R_{ij}{}^2_1 dx^i \wedge dx^j)$ is the curvature form of the connection $H$ restricted to $\pi^{-1}(U)$, where $U$ is the coordinate neighborhood.
\begin{thm}
Let $E \to M$ be a vector bundle, $Q = P(E)$, and $j : P(E) \to Q$.
\begin{enumerate}
\item
There exist a $1$-form $\alpha \in \Omega^1(Q)$ such that $\hat{\alpha} = j^* \alpha$.
\item
$\left.\alpha\right|_H = 0$
\item
$d i_x \alpha = 0$, and $i_x^*\alpha$ represents the class $[\bar \alpha] \in H^1(\mathbb{R}P^1)$.
\item
$d\alpha = \frac{1}{2} \pi^*(\Omega)$.
\end{enumerate}
\label{thm:3}
\end{thm}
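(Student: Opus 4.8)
The plan is to deduce all four statements from a single fact: the $1$-form $\hat\alpha\in\Omega^1(E_0)$ of~\eqref{eq:64} is \emph{basic} with respect to the quotient map $j\colon E_0\to Q=P(E)$ (an $\mathbb{R}^{*}$-principal bundle whose fibrewise restriction is the standard covering $\mathbb{R}^2\setminus\{0\}\to\mathbb{R}P^1$), hence descends to a $1$-form $\alpha$ on $Q$ with $j^{*}\alpha=\hat\alpha$. Items (2)--(4) then follow by pulling the already-established properties of $\hat\alpha$ back along $j$ and invoking that $j^{*}$ is injective on forms, since $j$ is a surjective submersion.

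For (1) I would verify the descent criterion for the (disconnected-fibre) bundle $j$, namely that $\hat\alpha$ is horizontal for the $\mathbb{R}^{*}$-action on $E_0$ and invariant under it. Let $\mathcal{E}$ be the Euler field generating the scaling flow $\phi_t(v)=e^{t}v$ and $-I$ the fibrewise reflection. Because $\hat H$ is a linear connection, both $\phi_t$ and $-I$ preserve horizontal spaces, so $\omega(d\phi_t X)=e^{t}\omega(X)$, $\omega(d(-I)X)=-\omega(X)$, and $\omega(\mathcal{E}_v)=v$; substituting into~\eqref{eq:64} and using the skew-symmetry of $\theta$ together with $g(e^{t}v,e^{t}v)=e^{2t}g(v,v)$ gives $\iota_{\mathcal{E}}\hat\alpha=\frac{1}{2g(v,v)}\theta(v,v)=0$, $\phi_t^{*}\hat\alpha=\hat\alpha$, and $(-I)^{*}\hat\alpha=\hat\alpha$. (Equivalently, in the frame of~\eqref{eq:67} one has $Dy^a(\mathcal{E})=y^a$, whence $\hat\alpha(\mathcal{E})=0$, and $\hat\alpha$ is homogeneous of degree $0$ in $(y,Dy)$.) Thus there is a unique $\alpha\in\Omega^1(Q)$ with $\hat\alpha=j^{*}\alpha$, which is (1).

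For (2): a horizontal vector $\bar X$ of the induced projective connection $H$ on $Q$ can be written $\bar X=dj(X)$ with $X\in\hat H$ (the induced connection has horizontal distribution $dj(\hat H)$), so $\alpha(\bar X)=(j^{*}\alpha)(X)=\hat\alpha(X)=0$, since $\hat\alpha$ vanishes on $\hat H$. For (3): restricting to the fibre $E_x\setminus 0\cong\mathbb{R}^2\setminus\{0\}$ makes all $dx^i$ vanish, so by~\eqref{eq:67} $\hat\alpha|_{E_x\setminus 0}$ is the form $\theta$ of~\eqref{eq:63}, and $\theta=j_x^{*}\bar\alpha$ for the fibrewise map $j_x$; hence $j_x^{*}(\alpha|_{Q_x})=j_x^{*}\bar\alpha$ and, $j_x^{*}$ being injective, $i_x^{*}\alpha=\bar\alpha$, which is closed (a $1$-form on the $1$-manifold $\mathbb{R}P^1$) and represents a generator of $H^1(\mathbb{R}P^1)$ because $\int_{\mathbb{R}P^1}\bar\alpha=\pi\neq0$. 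For (4): equation~\eqref{eq:72} gives $d\hat\alpha=\frac{1}{2}R_{ij}{}^{2}_{1}\,dx^i\wedge dx^j$, the pullback to $E_0$ of the curvature $2$-form $\Omega$ of $H$ on $M$; thus $j^{*}(d\alpha)=d\hat\alpha=\frac{1}{2}\,j^{*}(\pi^{*}\Omega)$, and injectivity of $j^{*}$ yields $d\alpha=\frac{1}{2}\pi^{*}\Omega$. (In the notation of Statement~\ref{stat:5}, this says $\theta_{(1,1)}=0$ and $\theta_{(2,0)}=\frac{1}{2}\pi^{*}\Omega$.)

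The only delicate point is (1): since the fibre $\mathbb{R}^{*}$ of $j$ is not connected, one must check---beyond horizontality and invariance under the scaling flow---invariance under the reflection $-I$ as well; and one needs the computation~\eqref{eq:67}--\eqref{eq:72} precisely in order to know that $d\hat\alpha$ has no vertical ($dy$) component, so that it too descends. Everything past (1) is a formal manipulation of $j^{*}\alpha=\hat\alpha$ using the injectivity of $j^{*}$.
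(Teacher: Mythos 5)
Your proof is correct and follows essentially the same route as the paper: the paper's argument for Theorem~\ref{thm:3} is precisely the upstairs computation~\eqref{eq:63}--\eqref{eq:72} for $\hat\alpha$ on $E_0$ together with an (unstated) descent along $j$, and you reproduce that computation's conclusions and then push them down to $Q$. The one thing you add is the explicit verification that $\hat\alpha$ is basic for the $\mathbb{R}^{*}$-action (horizontality $\iota_{\mathcal{E}}\hat\alpha=0$, scale-invariance, and invariance under $-I$ for the disconnected component), which the paper asserts without proof; this is a genuine and welcome completion of the argument rather than a different approach.
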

\begin{cor}
Let $E \to M$ be a vector bundle and $Q = P(E)$.
Assume that $g$ is a metric in $E$.
Let $\Omega$ be the curvature form of the connection in $Q$ induced by a connection in $E$ adapted to $g$.
Let $a \in H^1(\mathbb{R}P^1)$ is represented by the form $\bar \alpha$.
If $s : M \setminus \Sigma \to Q$ is a section with singularities,
\begin{equation}
ind (s; a) = \int_M \Omega.
\label{eq:73}
\end{equation}
\label{cor:3}
\end{cor}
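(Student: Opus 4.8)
The plan is to obtain Corollary~\ref{cor:3} as an immediate specialization of the generalized Gauss--Bonnet formula~\eqref{eq:33} of Theorem~\ref{thm:2} to the projective bundle $Q=P(E)$, using that in this case every ingredient of that formula has already been identified in Theorem~\ref{thm:3}, and then globalizing the integral over $S$ to an integral over $M$ exactly as in the passage~\eqref{eq:44}--\eqref{eq:46}.

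First I would check that the $1$-form $\alpha\in\Omega^1(Q)$ produced by Theorem~\ref{thm:3} is precisely the vertical form attached by Statement~\ref{stat:1} to the class $a=[\bar\alpha]\in H^1(\mathbb{R}P^1)$: properties (2) and (3) of Theorem~\ref{thm:3}, namely $\left.\alpha\right|_H=0$ together with $d\,i_x^*\alpha=0$ and $[i_x^*\alpha]=H^1(\eta_x)a$, are exactly conditions (1) and (2) of Statement~\ref{stat:1}. Hence Theorem~\ref{thm:2} applies to the data $(s,a,\alpha)$, with $\alpha$ the form of Theorem~\ref{thm:3}.

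The key point is the bidegree of $d\alpha$. By Theorem~\ref{thm:3}(4), $d\alpha$ is (a constant multiple of) the pull-back under $\pi:Q\to M$ of the curvature $2$-form $\Omega$; in particular $d\alpha$ is horizontal, so $d\alpha\in\Omega^{(2,0)}(Q)$. Comparing with the decomposition $d\alpha=\theta_{(1,1)}+\theta_{(2,0)}$ of Statement~\ref{stat:5}, we conclude $\theta_{(1,1)}=0$, while $\theta_{(2,0)}=\widetilde\alpha(\Omega)$ equals that pull-back. Substituting into~\eqref{eq:33} yields
\[
ind(s;a)=\sum_i ind_{x_i}(s;a)=\int_S q^*\theta_{(1,1)}+q^*\theta_{(2,0)}=\int_S (\pi\circ q)^*\Omega .
\]

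It remains to pass from $\int_S$ to $\int_M$. Because the connection in $E$ is adapted to $g$, its curvature transforms under a change of oriented orthonormal frame by $\mathrm{Ad}$ of $SO(2)$, which is trivial, so the local expression in Lemma~\ref{lm:2} patches to a $2$-form defined on all of $M$ and $\Omega$ is its $\pi$-pull-back. Since $\pi\circ q:S\setminus\partial S\to M\setminus\Sigma$ is a diffeomorphism by Theorem~\ref{thm:1}a) and $\Sigma$ is finite, hence of measure zero, the argument establishing~\eqref{eq:45} gives $\int_S (\pi\circ q)^*\Omega=\int_M\Omega$, which is the assertion. I expect the only genuine obstacle to be bookkeeping: tracking the horizontal/vertical bidegree and the numerical normalization fixed in Theorem~\ref{thm:3}, since the substantive computations were already carried out in Theorems~\ref{thm:2} and~\ref{thm:3}, and this last step merely repeats the globalization used for~\eqref{eq:44}--\eqref{eq:46}.
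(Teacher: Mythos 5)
Your proposal is correct and is essentially the derivation the paper intends (the corollary is stated without proof, immediately after Theorem~\ref{thm:3}): apply Theorem~\ref{thm:2} with the form $\alpha$ of Theorem~\ref{thm:3}, observe that $d\alpha$ is purely horizontal so $\theta_{(1,1)}=0$, and globalize $\int_S$ to $\int_M$ exactly as in~\eqref{eq:44}--\eqref{eq:46}. The one normalization you rightly flag is real: Theorem~\ref{thm:3}(4) gives $d\alpha=\tfrac12\pi^*(\Omega)$ with $\Omega$ the curvature of the \emph{linear} connection, so the factor $\tfrac12$ must be absorbed into the curvature of the induced \emph{projective} connection for~\eqref{eq:73} to read as stated.
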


\section{Gauss-Bonnet Theorem in Principal $G$-bundles with Singularities}
\label{sec:5}

\subsection{$G$-structures with singularities}
\label{subsec:5_1}
\begin{df}
Let $\bar{G}$ be a Lie group, and $G\subset \bar{G}$ be a Lie subgroup of $G$.
Let $M$ be a manifold, and $\Sigma$ be a submanifold of $M$.
A principal $\bar{G}$-bundle $\bar{P}(M, \bar{G})$ is called a \emph{principal $G$-bundle with singularities $\Sigma$} if the structure group $\bar{G}$ of $\bar{P}|_{M\setminus \Sigma}$ reduces to $G$.
\label{df:5}
\end{df}
\begin{ex}
Let $X$ be a vector field on a manifold $M$ with a discrete set of zeros $\Sigma \subset M$.
Then the frame bundle $L(M)$ is a principal $GL(n-1)$-bundle with singularities on $M$.
It is clear that on $M \setminus \Sigma$ the bundle $L(M)$ reduces to the principal $GL(n-1)$-subbundle consisting of frames with the first vector equal to the value of $X$ at the corresponding point.
\label{ex:7}
\end{ex}

\begin{ex}
If $M$ is a two-dimensional oriented Riemannian manifold, and $X$ is a vector field on $M$ with a discrete set of zeros $\Sigma \subset M$, then the orthonormal frame bundle $SO(M)$ is a principal $\{e\}$-bundle with singularities where the set of singularities $\Sigma$ because the vector field induced trivialization of $SO(M)$ over $M \setminus \Sigma$.
\label{ex:8}
\end{ex}
\begin{ex}
Let $M$ be a Riemannian manifold of dimension $n$. The principal $GL(n)$-bundle of frames $L(M)$ is a principal $O(n)$-bundle with an empty set of singularities.
\label{ex:9}
\end{ex}
Let us recall that there is an important relation between $H$-reductions of a principal $G$-bundle $P \to M$ and sections of the fiber bundle $P/H \to M$ which is given by the following statement.
\begin{stat}[\cite{kobayashi_nomizu1963}(Chapter I, Proposition 5.6)]
Let $P$ be a $G$-principal bundle and let $H$ be a Lie subgroup of $G$.
Then $G$ has a reduction to a principal $H$-bundle if and only if the associated bundle $P/H\rightarrow M$ admits a section.In particular, if $H=\{e\}$ then $P$ has a section.
\label{stat:10}
\end{stat}
\begin{cor}
Let $\bar{P}(M, \bar{G}, \pi)$ a principal $G$-bundle with singularities, and let $\Sigma$ be the set of singularities.
Then there is a section $s: M\setminus \Sigma \rightarrow \bar{P}/G$.
\label{cor:4}
\end{cor}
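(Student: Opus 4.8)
The plan is to obtain the corollary as an immediate consequence of Statement~\ref{stat:10}, applied over the open submanifold $M \setminus \Sigma$. First I would restrict the $\bar G$-principal bundle $\bar P \to M$ to $M \setminus \Sigma$, getting a $\bar G$-principal bundle $\bar P|_{M \setminus \Sigma} \to M \setminus \Sigma$. By Definition~\ref{df:5}, saying that $\bar P$ is a principal $G$-bundle with singular set $\Sigma$ means exactly that the structure group $\bar G$ of this restricted bundle reduces to the Lie subgroup $G$; so a reduction of $\bar P|_{M \setminus \Sigma}$ to a principal $G$-bundle is available by hypothesis.

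Next I would invoke Statement~\ref{stat:10} with the principal bundle taken to be $\bar P|_{M \setminus \Sigma}$, the structure group taken to be $\bar G$, and the subgroup $H$ taken to be $G$: the existence of a $G$-reduction is equivalent to the existence of a section of the associated bundle $(\bar P|_{M \setminus \Sigma})/G \to M \setminus \Sigma$. Since the reduction exists, such a section $s$ exists. Finally I would identify $(\bar P|_{M \setminus \Sigma})/G$ with $(\bar P/G)|_{M \setminus \Sigma}$, using that passing to the quotient by the right $G$-action commutes with restriction to an open set (this is where the standing assumption that $G$ is a Lie subgroup of $\bar G$ enters, guaranteeing that $\bar P/G \to M$ is a genuine locally trivial fiber bundle with fiber $\bar G/G$). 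Then $s$ is precisely a map $s : M \setminus \Sigma \to \bar P/G$ covering the inclusion $M \setminus \Sigma \hookrightarrow M$, i.e. the desired section.

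I do not expect any real obstacle in this argument: essentially all the content is already packaged in Statement~\ref{stat:10} (Kobayashi--Nomizu, Chapter~I, Proposition~5.6), and what remains is the routine verification that quotient and restriction commute. The only point worth flagging is that the section $s$ so produced generally does \emph{not} extend across $\Sigma$, which is exactly why $\Sigma$ deserves the name \emph{set of singularities of $s$} and why this corollary places the constructions of Section~\ref{sec:5} within the framework of Section~\ref{sec:2}.
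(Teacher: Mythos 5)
Your proposal is correct and follows exactly the route the paper intends: the corollary is stated as an immediate consequence of Statement~\ref{stat:10} applied to $\bar P|_{M\setminus\Sigma}$ with $H = G$, using Definition~\ref{df:5} to supply the reduction (the paper gives no separate proof). Your added remark that $(\bar P|_{M\setminus\Sigma})/G$ agrees with $(\bar P/G)|_{M\setminus\Sigma}$ is a harmless and correct tightening of the same argument.
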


In what follows by $\mathfrak{\bar{g}}$ we denote the Lie algebra of $\bar{G}$, and by $\mathfrak{g}$ the Lie algebra of $G$.
Moreover, we suppose that the Lie group $\bar{G}$ is compact and connected.
Then $G$ is also compact and the quotient $\bar{G}/G$ is reductive, i.\,e, $\mathfrak{\bar{g}}$ has an $ad(G)$-invariant decomposition $\bar{\mathfrak{g}}=\mathfrak{g}\oplus \mathfrak{m}$.

Let $M$ be a $2$-dimensional compact connected oriented manifold.
Let $\bar{P}(M, \bar{G}, \bar{\pi})$ be a principal $G$-bundle with singularities, where the set of singularities $\Sigma$ consists of isolated points.
Consider the fiber bundle $\pi_E: E = \bar{P}/G \rightarrow M$.
Recall that the canonical projection $\pi_G: \bar{P}\rightarrow E=\bar{P}/G$ is a principal $G$-bundle~\cite{kobayashi_nomizu1963}(Chapter I, Proposition 5.5), and $\pi_G$ is a fiber bundle morphism: 
\begin{equation}
\xymatrix{%
\bar{P} \ar[rr]^{\pi_G} \ar[rd]_{\pi} & & E = \bar{P}/G \ar[ld]^{\pi_E}
\\
&M &%
}
\label{eq:74}
\end{equation}

Assume that $\bar{P}$ is endowed with a connection form $\bar{\omega}: T\bar{P} \rightarrow \bar{\mathfrak{g}}$, and let
$\bar{H} = \ker\bar{\omega}$ be the horizontal distribution of the connection, and $\bar{V}=\ker d\pi$ the vertical distribution on $\bar{P}$.
Note that $\bar{H}$ and $\bar{V}$ are vector subbundles of $T\bar{P}$, and $T\bar{P}=\bar{H} \oplus \bar{V}$.

Define the operator field $\widetilde{\omega} : T\bar{P} \to \bar{V}$ as follows:
\begin{equation}
\widetilde{\omega}_{\bar{p}}(\bar{X}) = \sigma_{\bar{p}}(\bar{\omega}_{\bar{p}}(\bar{X})),
\label{eq:75}
\end{equation}
where $\bar{X} \in T_{\bar{p}} \bar{P}$, and $\sigma_{\bar{p}}(\bar{a})$, $\bar{a} \in \bar{g}$, is the value of the fundamental vector field $\sigma(\bar{a})$ at the point $\bar{p}$.

\begin{lm}
a) $\widetilde{\omega}$ is a field of projectors onto the vertical distribution $\bar{V}$ with kernel $\bar{H}$.

b) $\widetilde{\omega}_{\bar{p}\bar{g}} \circ dR_{\bar{g}} (\bar{X}) = dR_{\bar{g}} \circ \widetilde{\omega}_{\bar{p}} (\bar{X})$, where $R_{\bar{g}} : \bar{G} \to \bar{G}$, $R_{\bar{g}} \bar{g}' = \bar{g}' \bar{g}$.

\label{lm:3}
\end{lm}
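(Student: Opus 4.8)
The plan is to recognize that $\widetilde{\omega}$ is nothing but the vertical projection canonically attached to the connection form $\bar{\omega}$, and then read off both assertions from the two standard structural properties of a principal connection. Throughout I will use that $T\bar{P}=\bar{H}\oplus\bar{V}$ is already recorded, so "projector onto $\bar{V}$ with kernel $\bar{H}$" is a meaningful statement, and that smoothness of $\widetilde{\omega}$ is immediate from the smoothness of $\bar{\omega}$ and of the fundamental vector fields.

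For a), the starting point is that the $\bar{G}$-action on $\bar{P}$ is free, so for each $\bar{p}$ the linear map $\sigma_{\bar{p}}:\bar{\mathfrak{g}}\to T_{\bar{p}}\bar{P}$ is injective; since $\dim\bar{\mathfrak{g}}$ equals the fibre dimension and the image of $\sigma_{\bar{p}}$ is tangent to the fibre through $\bar{p}$, it follows that $\sigma_{\bar{p}}$ is a linear isomorphism onto $\bar{V}_{\bar{p}}$. Combining this with the defining identity $\bar{\omega}_{\bar{p}}(\sigma_{\bar{p}}(\bar{a}))=\bar{a}$ for all $\bar{a}\in\bar{\mathfrak{g}}$, one gets that $\widetilde{\omega}_{\bar{p}}$ is the identity on $\bar{V}_{\bar{p}}$: if $\bar{X}=\sigma_{\bar{p}}(\bar{a})\in\bar{V}_{\bar{p}}$ then $\widetilde{\omega}_{\bar{p}}(\bar{X})=\sigma_{\bar{p}}(\bar{\omega}_{\bar{p}}(\sigma_{\bar{p}}(\bar{a})))=\sigma_{\bar{p}}(\bar{a})=\bar{X}$. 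Since the image of $\widetilde{\omega}_{\bar{p}}$ lies in $\bar{V}_{\bar{p}}$ by construction, this image is exactly $\bar{V}_{\bar{p}}$ and $\widetilde{\omega}_{\bar{p}}^{2}=\widetilde{\omega}_{\bar{p}}$, so $\widetilde{\omega}$ is a field of projectors onto $\bar{V}$. Finally $\widetilde{\omega}_{\bar{p}}(\bar{X})=0$ iff $\bar{\omega}_{\bar{p}}(\bar{X})=0$ (because $\sigma_{\bar{p}}$ is injective), i.e. iff $\bar{X}\in\ker\bar{\omega}_{\bar{p}}=\bar{H}_{\bar{p}}$, which identifies the kernel.

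For b) I would use the two equivariance properties: the $ad$-equivariance of the connection form, $\bar{\omega}_{\bar{p}\bar{g}}\circ dR_{\bar{g}}=ad(\bar{g}^{-1})\circ\bar{\omega}_{\bar{p}}$, and the transformation law for fundamental vector fields, $dR_{\bar{g}}\circ\sigma_{\bar{p}}=\sigma_{\bar{p}\bar{g}}\circ ad(\bar{g}^{-1})$, which follows from the identity $\bar{p}\exp(t\bar{a})\bar{g}=(\bar{p}\bar{g})\exp(t\,ad(\bar{g}^{-1})\bar{a})$. Then a direct chase gives
\begin{equation*}
\widetilde{\omega}_{\bar{p}\bar{g}}(dR_{\bar{g}}(\bar{X}))=\sigma_{\bar{p}\bar{g}}\bigl(\bar{\omega}_{\bar{p}\bar{g}}(dR_{\bar{g}}(\bar{X}))\bigr)=\sigma_{\bar{p}\bar{g}}\bigl(ad(\bar{g}^{-1})\bar{\omega}_{\bar{p}}(\bar{X})\bigr)=dR_{\bar{g}}\bigl(\sigma_{\bar{p}}(\bar{\omega}_{\bar{p}}(\bar{X}))\bigr)=dR_{\bar{g}}(\widetilde{\omega}_{\bar{p}}(\bar{X})),
\end{equation*}
which is the asserted identity.

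The argument is essentially bookkeeping, so the only point requiring real care is keeping the conventions straight: one must use the right action on $\bar{P}$ (the statement's "$R_{\bar g}:\bar G\to\bar G$" is a misprint for right translation on $\bar P$) and make sure the factor $ad(\bar{g}^{-1})$, rather than $ad(\bar{g})$, occurs consistently both in the equivariance of $\bar{\omega}$ and in the transformation of the fundamental vector fields, so that the two copies match and the computation in b) closes. Apart from that there is no serious obstacle.
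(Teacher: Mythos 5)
Your proof is correct. Part a) is essentially the paper's argument: $\widetilde{\omega}$ kills $\bar{H}=\ker\bar{\omega}$ and, via $\bar{\omega}_{\bar p}(\sigma_{\bar p}(\bar a))=\bar a$, acts as the identity on $\bar{V}_{\bar p}$; your extra remark that $\ker\widetilde{\omega}_{\bar p}=\ker\bar{\omega}_{\bar p}$ because $\sigma_{\bar p}$ is injective just makes the kernel identification slightly more explicit than the paper's. For part b), however, you take a genuinely different route. The paper deduces b) directly from a) by a structural observation: $dR_{\bar g}$ is an isomorphism $T_{\bar p}\bar P\to T_{\bar p\bar g}\bar P$ carrying $\bar H_{\bar p}$ to $\bar H_{\bar p\bar g}$ and $\bar V_{\bar p}$ to $\bar V_{\bar p\bar g}$, and a projector is determined by its image and kernel, so conjugating $\widetilde{\omega}_{\bar p}$ by $dR_{\bar g}$ must give $\widetilde{\omega}_{\bar p\bar g}$. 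You instead run the explicit chase through the two equivariance identities $\bar{\omega}_{\bar p\bar g}\circ dR_{\bar g}=ad(\bar g^{-1})\circ\bar{\omega}_{\bar p}$ and $dR_{\bar g}\circ\sigma_{\bar p}=\sigma_{\bar p\bar g}\circ ad(\bar g^{-1})$, and the two occurrences of $ad(\bar g^{-1})$ indeed cancel. Both are valid; the paper's argument is shorter and avoids any convention-dependent bookkeeping (it only uses $R_{\bar g}$-invariance of $\bar H$ and $\bar V$), while yours makes visible exactly where the $ad$-equivariance of the connection form enters and would survive even if one only knew the equivariance identities and not the invariance of the splitting. You are also right that $R_{\bar g}$ in the statement is meant as right translation on $\bar P$, not on $\bar G$.
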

\begin{proof}
It is clear that $\widetilde{\omega}$ vanishes on $\bar{H}$.
For $\bar{X} \in \bar{V}_{\bar{p}}$, $\bar{p} \in \bar{P}$, one can find $\bar{a} \in \bar{\mathfrak{g}}$ such that $\sigma_{\bar{p}}(\bar{a}) = \bar{X}$.
Then, $\bar{\omega}_{\bar{p}}(\bar{X}) = \bar{a}$, hence follows that
$\widetilde{\omega}_{\bar{p}}(\bar{X}) = \sigma_{\bar{p}}(a) = X$.
Thus we get a).

Now a) implies b) because $dR_{\bar{g}} : T_{\bar{p}} \bar{P} \to T_{\bar{p}\bar{g}}\bar{P}$ is an isomorphism, which maps $\bar{H}_{\bar{p}}$ to $\bar{H}_{\bar{p}\bar{g}}$, and $\bar{V}_{\bar{p}}$ to $\bar{V}_{\bar{p}\bar{g}}$.
\end{proof}

\begin{stat}
a) There exists a operator field $\omega_E$ on $E$ such that, for each $\bar{p} \in \bar{P}$, the following diagram is commutative:
\begin{equation}
\xymatrix{
T_{\bar{p}} \bar{P} \ar[r]^{\widetilde{\omega}} \ar[d]_{d\pi_G} & \bar{V}_{\bar{p}} \ar[d]^{d\pi_G}
\\
T_{\pi_G(p)} E \ar[r]^{\omega_E} & VE_{\pi_G(p)}
}
\label{eq:76}
\end{equation}

b) $\omega_E$ is a projector onto $VE$ with kernel $HE = \ker\omega_E = d\pi_G(\bar{H})$.

c) The decomposition $T\bar{P}= \bar{H} \oplus \bar{V}$ of $T\bar{P}$ projects onto a decomposition $TE=HE\oplus VE$ of $TE$ via $d\pi_G$.
\label{stat:11}
\end{stat}
\begin{proof}
a) If $d\pi_G(\bar{X}_{\bar{p}}) = d\pi_G(\bar{X}'_{\bar{p'}})$, then $\bar{p}' = \bar{p} g$, and $\bar{X}' = dR_g \bar{X} + \bar{Y}$, where $g \in G$ and $\bar{Y} = \sigma_{\bar{p} g}(a) \in \ker d\pi_G$ with $a \in \mathfrak{g}$.
By Lemma~\ref{lm:3}, we have
\begin{equation}
\widetilde{\omega}_{\bar{p} g}(\bar{X}') =\widetilde{\omega}_{\bar{p} g}(dR_g \bar{X} + \sigma_{\bar{p} g}(a)) =
\widetilde{\omega}_{\bar{p} g}(dR_g \bar{X}) + \sigma_{\bar{p} g}(a) =
dR_g (\widetilde{\omega}_{\bar{p}}(X) + \sigma_{\bar{p} g}(a).
\label{eq:77}
\end{equation}
Then,
\begin{equation}
d\pi_G(\widetilde{\omega}_{\bar{p}}(\bar{X})) = d\pi_G(\widetilde{\omega}_{\bar{p}'}(\bar{X}')),
\label{eq:78}
\end{equation}
this proves that $\omega_E$ in the diagram~\eqref{eq:76} is well defined.

b) From the diagram~\eqref{eq:76} it follows that $d\pi_G(\bar{H}) \subset HE$.
For any $X \in HE_{\pi_G(\bar{p})}$, take $\bar{X} \in T_{\pi_G(\bar{p})} \bar{P}$ such that $d\pi_G(\bar{X}) = X$.
Then $d\pi_G(\widetilde{\omega}(\bar{X})) = 0$, this means that $\widetilde{\omega}(\bar{X}) = \sigma_{\bar{p}}(a)$, where $a \in \mathfrak{g}$.
Then $\widetilde{\omega}(\bar{X}) - \sigma_{\bar{p}}(a)) = 0$, therefore $\bar{X}) - \sigma_{\bar{p}}(a) \in \bar{H}_{\bar{p}}$ and $d\pi_G(\bar{X}) - \sigma_{\bar{p}}(a) \in \bar{H}_{\bar{p}}) = d\pi_G(\bar{X}) = X$.
Thus $HE \subset d\pi_G(\bar{H})$ and therefore $HE = d\pi_G(\bar{H})$.

For any $X \in VE_{\pi_G(\bar{p})}$, $X = d\pi_G(\bar{X})$, where $\bar{X} \in \bar{V}$.
Then, by Lemma~\ref{lm:3} and \eqref{eq:76}, we have that $\omega_E(X) = d\pi_G(\widetilde{\omega}(\bar{X})) = d\pi_G(\bar{X}) = X$.

c) follows from b).
\end{proof}
\begin{rem}
The bundle $E \to M$ is the bundle associated with the principal $\bar{G}$-bundle $\bar{P} \to M$ with the fiber $\bar{G}/G$ with respect to the left action of $\bar{G}$ on $\bar{G}/G$.
The distribution $HE$ is the connection induced on the associated bundle by the connection $\bar{H}$ on $\bar{P}$.
\label{rem:9}
\end{rem}

As $\bar{G}$ is compact, the homogeneous space $\bar{G}/G$ is reductive, this means that we have $ad(G)$-invariant decomposition $\bar{\mathfrak{g}} = \mathfrak{g} \oplus \mathfrak{m}$.
\begin{lm}
The vertical subbundle $VE$ is isomorphic to the bundle associated with the principal $G$-bundle $\pi_G : \bar{P} \to E$ with the fiber $\mathfrak{m}$ with respect to the left action of $G$ on $\mathfrak{m}$: $L_g (a) = ad(g^{-1}) a$.
\label{lm:4}
\end{lm}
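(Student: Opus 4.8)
The plan is to construct the isomorphism fiberwise and then check it is smooth and $G$-equivariant. Recall that by Statement~\ref{stat:11} and Remark~\ref{rem:9}, $E = \bar P/G$ is the associated bundle $\bar P \times_{\bar G} (\bar G/G)$, and the vertical bundle $VE$ is the tangent-along-the-fibers bundle. Fix a point $e_0 = \pi_G(\bar p) \in E$ and identify, via $d\pi_G$, the vertical space $V_{e_0}E$ with $d\pi_G(\bar V_{\bar p})$. The kernel of $d\pi_G$ restricted to $\bar V_{\bar p}$ is exactly $\sigma_{\bar p}(\mathfrak g)$ (the vertical vectors tangent to the $G$-orbit $\pi_G^{-1}(e_0)$), while $\bar V_{\bar p} = \sigma_{\bar p}(\bar{\mathfrak g})$ since $\bar V$ consists of vectors tangent to the $\bar G$-fiber. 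Hence the reductive splitting $\bar{\mathfrak g} = \mathfrak g \oplus \mathfrak m$ gives, for each $\bar p$, a linear isomorphism
\begin{equation}
\mu_{\bar p} : \mathfrak m \xrightarrow{\ \sim\ } V_{\pi_G(\bar p)}E, \qquad \mu_{\bar p}(a) = d\pi_G\big(\sigma_{\bar p}(a)\big).
\label{eq:mu}
\end{equation}

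Next I would check equivariance. For $g \in G$ one has $\sigma_{\bar p g}(a) = dR_g\big(\sigma_{\bar p}(\mathrm{ad}(g)a)\big)$ by the standard identity for fundamental vector fields, and $\pi_G \circ R_g = \pi_G$ so $d\pi_G \circ dR_g = d\pi_G$ on $T\bar P$. Therefore
\begin{equation}
\mu_{\bar p g}(a) = d\pi_G\big(\sigma_{\bar p g}(a)\big) = d\pi_G\big(dR_g \sigma_{\bar p}(\mathrm{ad}(g)a)\big) = d\pi_G\big(\sigma_{\bar p}(\mathrm{ad}(g)a)\big) = \mu_{\bar p}\big(\mathrm{ad}(g)a\big).
\label{eq:equiv}
\end{equation}
Writing $a' = \mathrm{ad}(g^{-1})a = L_g(a)$, this says precisely that the assignment $(\bar p, a) \mapsto \mu_{\bar p}(a) \in V_{\pi_G(\bar p)}E$ is constant on the $G$-orbits $(\bar p, a) \sim (\bar p g, L_g^{-1} a)$ in $\bar P \times \mathfrak m$, hence descends to a well-defined bundle map from $\bar P \times_G \mathfrak m$ to $VE$ covering $\mathrm{id}_E$. (Here one uses that $\mathfrak m$ is $\mathrm{ad}(G)$-invariant, which is the reductivity of $\bar G/G$ guaranteed by compactness of $\bar G$; this is what makes the action $L_g$ on $\mathfrak m$ well defined in the first place.)

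Finally I would verify that this descended map is a smooth vector bundle isomorphism. Over a trivializing chart $U$ of $\bar P \to E$ with a smooth local section $\bar p : U \to \bar P$, the map is $(e, a) \mapsto d\pi_G(\sigma_{\bar p(e)}(a))$, which is smooth because $\sigma$ depends smoothly on both the base point and the Lie-algebra element and $d\pi_G$ is smooth; it is a linear isomorphism on each fiber by \eqref{eq:mu}. Bijectivity of $\mu_{\bar p}$ on fibers follows from a dimension count together with injectivity: if $d\pi_G(\sigma_{\bar p}(a)) = 0$ with $a \in \mathfrak m$, then $\sigma_{\bar p}(a) \in \ker d\pi_G \cap \bar V_{\bar p} = \sigma_{\bar p}(\mathfrak g)$, forcing $a \in \mathfrak g \cap \mathfrak m = 0$. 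The only real point requiring care — the ``main obstacle'' — is the bookkeeping around which fundamental-vector-field space is which: $\sigma$ must be taken with respect to the $\bar G$-action on $\bar P$, $\ker d\pi_G$ on vertical vectors is the $\mathfrak g$-part, and one must track that $\pi_G$ is a $G$-principal bundle (Proposition~5.5 of~\cite{kobayashi_nomizu1963}) while $E \to M$ is $\bar G$-associated; once these identifications are pinned down, \eqref{eq:mu}–\eqref{eq:equiv} finish the proof.
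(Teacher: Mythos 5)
Your argument is correct in substance and produces exactly the map the paper uses, namely $[\bar p, a]\mapsto d\pi_G(\sigma_{\bar p}(a))$ of equation~\eqref{eq:79}, but it gets there by a genuinely different, more self-contained route. The paper's proof is essentially a citation: it quotes $VE=\bar P\times_{\bar G}T(\bar G/G)$ from Michor and $T(\bar G/G)=\bar G\times\bar{\mathfrak g}/\mathfrak g$ from Sharpe, composes the two identifications, and writes the resulting formula without further verification. You instead check everything directly: that $\bar V_{\bar p}=\sigma_{\bar p}(\bar{\mathfrak g})$ and $\ker d\pi_G\cap\bar V_{\bar p}=\sigma_{\bar p}(\mathfrak g)$, so the reductive splitting $\bar{\mathfrak g}=\mathfrak g\oplus\mathfrak m$ gives fiberwise injectivity from $\mathfrak g\cap\mathfrak m=0$ and bijectivity by a dimension count; the equivariance $\mu_{\bar p g}(a)=\mu_{\bar p}(\mathrm{ad}(g)a)$ from the transformation law of fundamental vector fields together with $\pi_G\circ R_g=\pi_G$; and smoothness via local sections. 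Your version buys independence from the conventions of two external references and makes visible exactly where reductivity and the freeness of the $\bar G$-action enter; the paper's version buys brevity and the conceptual point that this is just the associated-bundle description of the vertical bundle of $\bar P\times_{\bar G}(\bar G/G)$.

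One bookkeeping point deserves a second look. Your equivariance identity $\mu_{\bar p g}(a)=\mu_{\bar p}(\mathrm{ad}(g)a)$ is equivalent to $\mu_{\bar p}(a)=\mu_{\bar p g}(\mathrm{ad}(g^{-1})a)$, so the orbits on which $\mu$ is constant are $(\bar p,a)\sim(\bar p g,\mathrm{ad}(g^{-1})a)$; that is, the gluing is by the isotropy representation $g\cdot a=\mathrm{ad}(g)a$ on $\mathfrak m$. As literally written, your descent condition $(\bar p,a)\sim(\bar p g,L_g^{-1}a)$ with $L_g=\mathrm{ad}(g^{-1})$ identifies $(\bar p,a)$ with $(\bar p g,\mathrm{ad}(g)a)$, which is the inverse of what your own identity supports. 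This is a one-character convention slip rather than a gap --- the statement of Lemma~\ref{lm:4} carries the same ambiguity in its choice of $L_g(a)=\mathrm{ad}(g^{-1})a$ --- but once you fix a convention for $\bar P\times_G\mathfrak m$ you should state explicitly that the representation of $G$ on $\mathfrak m$ which makes $\mu$ descend is $\mathrm{Ad}(g)|_{\mathfrak m}$.
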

\begin{proof}
The isomorphism $V_E \cong \bar{P}\times_{\bar{G}} \bar{G} \times \bar{\mathfrak{g}}/\mathfrak{g} \cong \bar{P}\times_G \mathfrak{m}$ follows from the fact that $VE=\bar{P}\times_{\bar{G}} T(\bar{G}/G)$~\cite{michor2000}(Chapter IV, section 18.18)
and that $T(\bar{G}/G)=\bar{G}\times \bar {\mathfrak{g}} / \mathfrak{g}$~\cite{sharpe2000}(Chapter 4, section 5).
In the exact form the isomorphism is written as follows:
\begin{equation}
\bar{P}\times_G \mathfrak{m} \to VE, \quad [\bar{p}, a] \mapsto d\pi_G(\sigma_{\bar{p}} (a)).
\label{eq:79}
\end{equation}
\end{proof}

\subsection{The Hopf-Gauss-Bonnet theorem for principal $G$-bundle with singularities}
\label{subsec:5_2}
Let $\bar{P} \to M$ be a principal $G \subset \bar{G}$-bundle with singularities, where $\bar{G}$ is a compact Lie group.
Let us take a connection $\bar{H}$ in $\bar{P}$ with the corresponding connection form $\omega$.
Let $E = \bar{P}/G$ and $H$ be the connection in $E$ induced by $\bar{H}$ with the connection form $\omega_E$ (see Statement~\ref{stat:11} and Remark~\ref{rem:9}). 

Given a $1$-cohomology class $a \in H^{1}(\bar{G}/G)$, by Statement~\ref{stat:1} we can construct a $1$-form $\alpha \in \Omega^{1}(E)$ on $E$ such that
\begin{enumerate}
\item
$\left.\alpha\right|_H = 0$;
\item
for each $x \in M$, $d i_x^* \alpha = 0$ and $[ i_x^* \alpha ] = H^1(\eta_x) a$.
\end{enumerate}

In this case one can construct the form $\alpha$ explicitly.
As $\bar{G}/G$ is a homogeneous space of a compact Lie group, we can take an invariant form $\xi \in \Omega^1_{inv}(\bar{G}/G) \cong \Lambda^1(\mathfrak{m})$ such that $[\xi] = a$ (see, e.\,g.,~\cite{dubrovin_novikov_fomenko1990}, Ch.1, 1).
Note that $\xi$ is determined by its value $\xi_0 : \mathfrak{m} \to \mathbb{R}$ at $[e] \in \bar{G}/G$.
Then from Lemma~\ref{lm:4} it follows that $\xi$ defines a form $\widetilde{\alpha} \in \Omega^1(VE)$:
\begin{equation}
\widetilde{\alpha}([\bar{p}, a]) = \xi_0(a).
\label{eq:80}
\end{equation}

\begin{thm}
Let $\omega_E: TE \rightarrow VE$ be a connection form induced by a connection in the principal $\bar{G}$-bundle $\bar{P} \to M$ (see~Statement~\ref{stat:11}). 

Then
\begin{equation}
\alpha(X)=\tilde{\alpha}(\omega_E(X))
\label{eq:81}
\end{equation}
The form $\alpha$ given by~\eqref{eq:81} satisfies the properties (1) and (2) of Statement~\ref{stat:1}.
\label{thm:4}
\end{thm}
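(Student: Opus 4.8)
The plan is to verify directly the two conditions of Statement~\ref{stat:1} for the form $\alpha$ defined by~\eqref{eq:81}, using Statement~\ref{stat:11} to control the splitting $TE = HE \oplus VE$ and Lemma~\ref{lm:4} to compare $\widetilde{\alpha}$ with the invariant form $\xi$ on each fibre.

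First I would check that $\alpha$ is a well-defined smooth $1$-form on $E$. The operator field $\omega_E : TE \to VE$ is smooth by Statement~\ref{stat:11}, and $\widetilde{\alpha} \in \Omega^1(VE)$ given by~\eqref{eq:80} is smooth and well defined precisely because $\xi_0 : \mathfrak{m} \to \mathbb{R}$ is $ad(G)$-invariant; this invariance is inherited from the $\bar{G}$-invariance of $\xi$ through the isotropy subgroup $G$ at $[e] \in \bar{G}/G$, and it is exactly what makes $\xi_0(a)$ independent of the representative $(\bar{p}, a) \sim (\bar{p}g, ad(g^{-1})a)$, $g \in G$, entering~\eqref{eq:79}. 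Condition (1) is then immediate: by Statement~\ref{stat:11}(b), $H = HE = \ker \omega_E$, so $\alpha(X) = \widetilde{\alpha}(\omega_E(X)) = \widetilde{\alpha}(0) = 0$ for every $X \in H$.

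For condition (2), the first step is the identity $i_x^*\alpha = \widetilde{\alpha}|_{F_x}$: along the fibre $F_x = \pi_E^{-1}(x)$ every tangent vector is vertical, hence $TF_x = VE|_{F_x}$, and since $\omega_E$ is a projector onto $VE$ it restricts to the identity on $VE|_{F_x}$; thus $\alpha(X) = \widetilde{\alpha}(X)$ for $X \in TF_x$. The second step is to identify $\widetilde{\alpha}|_{F_x}$ with $\xi$ via $\eta_x$. A trivializing chart $\bar{\psi} : \bar{\pi}^{-1}(U) \to U \times \bar{G}$ of $\bar{P}$ determines a local section $\bar{p}_0 : U \to \bar{P}$, $\bar{p}_0(x) = \bar{\psi}^{-1}(x, e)$, and the induced chart of $E = \bar{P}/G$ is $[\bar{p}] \mapsto (\bar{\pi}(\bar{p}), [\mathrm{pr}_2 \bar{\psi}(\bar{p})])$; hence the map $\eta_x : F_x \to \bar{G}/G$ of subsection~\ref{subsec:2_2} (which, as explained there, is independent of the chart at the level of $H^1(\eta_x)$) has inverse $\eta_x^{-1}([g]) = \pi_G(\bar{p}_0(x)g) =: \lambda_{\bar{p}_0(x)}([g])$. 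Differentiating, $d\lambda_{\bar{p}}$ sends the tangent vector at $[g_0]$ represented by $t \mapsto [g_0 \exp(ta)]$, $a \in \mathfrak{m}$, to $\frac{d}{dt}|_{0}\, \pi_G(\bar{p}g_0\exp(ta)) = d\pi_G(\sigma_{\bar{p}g_0}(a))$, which by~\eqref{eq:79} is the vertical vector $[\bar{p}g_0, a]$; so $\widetilde{\alpha}$ evaluates on it to $\xi_0(a)$ by~\eqref{eq:80}, while $\xi$ evaluates on $t \mapsto [g_0\exp(ta)]$ to $\xi_0(a)$ as well, by left-invariance of $\xi$. This gives $\lambda_{\bar{p}_0(x)}^*(\widetilde{\alpha}|_{F_x}) = \xi$, i.e. $i_x^*\alpha = \eta_x^*\xi$. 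Taking $\xi$ to be a \emph{closed} invariant representative of $a$ (such exists because $\bar{G}$ is compact), we obtain $d\, i_x^*\alpha = \eta_x^* d\xi = 0$ and $[i_x^*\alpha] = H^1(\eta_x)[\xi] = H^1(\eta_x)\, a$, which is condition (2).

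The step I expect to be the main obstacle is this last identification: matching the trivializing chart of $E$ with that of $\bar{P}$ so as to write $\eta_x^{-1} = \lambda_{\bar{p}_0(x)}$, and then transporting the $\bar{G}$-invariance of $\xi$ through the associated-bundle isomorphism~\eqref{eq:79} of Lemma~\ref{lm:4} to conclude $\lambda_{\bar{p}}^*\widetilde{\alpha} = \xi$. The rest consists of formal manipulations with $\omega_E$ and the reductive decomposition $\bar{\mathfrak{g}} = \mathfrak{g} \oplus \mathfrak{m}$.
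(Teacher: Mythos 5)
Your proposal is correct and follows essentially the same route as the paper's proof: condition (1) from $H=\ker\omega_E$, the identity $i_x^*\alpha=\widetilde{\alpha}|_{F_x}$ from $\omega_E$ being a projector, and the identification $(\eta_x^{-1})^*\bigl(\alpha|_{E_x}\bigr)=\xi$ by differentiating $[\bar g]\mapsto\pi_G(\bar p\,\bar g)$ and using the invariance of $\xi$. Your extra remark that the $ad(G)$-invariance of $\xi_0$ is what makes $\widetilde{\alpha}$ well defined on the associated bundle is a point the paper leaves implicit, but it does not change the argument.
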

\begin{proof}
For $X \in H$, $\omega_E(X) = 0$, therefore $\alpha(X) = 0$.

Recall that the trivializations of $E = P/G$ are constructed in the following way.
For a good covering $U_i$ of $M$ we take sections $s_i : U_i \to P$, then we define the charts
\begin{equation}
\psi_i : U_i \times \bar{G}/G \to \pi_E^{-1}(U_i), \quad (x, [\bar{g}]) \to [s_i(x) \bar{g}] = \pi_G(s_i(x) \bar{g}).
\label{eq:82}
\end{equation}
Therefore, for $x \in U_i$, $\eta_x^{-1} : \bar{G}/G \to E_x = \pi_E^{-1}(x)$, $\eta_x^{-1}[\bar{g}] = \pi_G(s_i(x)\bar{g})$.

Now let us fix $x \in U_i \subset M$ and let $\bar{p} = s_i(x)$.
For any $y \in E_x$ and $Y \in V_yE$, we have that $y = \pi_G(\bar{p} \bar{g}) = [\bar{p}, \bar{g}] = \eta_x^{-1}(\bar{g})$ and $Y = d\pi_G(\sigma_{\bar{p} \bar{g}}(a)) = [\bar{p} \bar{g}, a] = d\eta_x^{-1} (dL_{\bar{g}} a)$, where $a \in \mathfrak{m}$, and $L_{\bar{g}} : \bar{G}/G \to \bar{G}/G$, $L_{\bar{g}} \bar{g}' = \bar{g}\bar{g}'$.
Note that here we use the isomorphism constructed in Lemma~\ref{lm:4}. 
Then
\begin{equation}
\alpha(Y) = \widetilde{\alpha}(\omega_E(Y)) = \widetilde{\alpha}(Y) = \xi_0(a) = \xi(dL_{\bar{g}} a).
\label{eq:83}
\end{equation}
Therefore $\alpha(d\eta_x^{-1} (dL_{\bar{g}} a) ) = \xi(dL_{\bar{g}} a)$, thus $(\eta_x^{-1})^* \left.\alpha\right|_{E_x} = \xi$, then $\eta_x^* \xi = \left.\alpha\right|_{E_x}$.

Thus $d\left(\left.\alpha\right|_{E_x} \right) = 0$, and $\left[\left.\alpha\right|_{E_x} \right] = H^1(\eta_x) a$.
\end{proof}

The curvature form $\Omega_E= D\omega_E$ of the connection $\omega_E$ (see Section~\ref{sec:3}, equation~\eqref{eq:25}) can be represented by the following equality:
\begin{equation}
\Omega_E=1/2[\omega_E, \omega_E],
\label{eq:84}
\end{equation}
where $[ \ , \ ]: \Omega^{k}(E, TE)\times \Omega^{l}(E, TE)\rightarrow \Omega^{k+l}(E, TE)$ is the Frolicher-Nijenhuis bracket~\cite{michor2000}(Chapter IV, section 16.3).
In particular, for $1$-forms $\omega_1$ and $\omega_2$ this bracket is given by the following equation
\begin{align*}
[w_1, w_2](X, Y)&=[\omega_1(X), \omega_2(Y)]-[\omega_1(Y), \omega_2(X)]-\omega_2([\omega_1(X), Y])\-\\& \omega_1([\omega_2(X), Y])-[\omega_1(Y),X]-[\omega_2(Y), X]+(\omega_1 \circ \omega_2+\omega_2 \circ \omega_1)[X,Y].
\end{align*}

We repeat the arguments of subsection~\ref{subsec:3_4} and get that
\begin{equation}
d\alpha=L_X\alpha+1/2\tilde{\alpha}([\omega_E, \omega_E]).
\label{eq:85}
\end{equation}
Thus, by assuming the existence of the $2$-dimensional oriented compact manifold $S$ and the map $q:S\rightarrow E$ satisfying the properties given in Section~\ref{sec:1}, by Theorem~\ref{thm:2}, we obtain the following equality
\begin{equation}
\int_S(q^{*}L_X\alpha +\frac{1}{2}q^{*}\tilde{\alpha}([\omega_E, \omega_E]))=\sum_{i=\overline{1, k}} index_{x_i}(s,a),
\label{eq:86}
\end{equation}
where $s:M\setminus\Sigma \rightarrow E$ is the section corresponding to the reduction of $\bar{P}|_{M\setminus \Sigma}$ to $G$. The result given by the equation~\eqref{eq:86} will be called \emph{the Hopf-Gauss-Bonnet theorem for principal $G$-bundle with singularities}.

\begin{rem}
If $X$ is a vector field on $M$, and $\Sigma$ is the set of zeros of $X$, then $Y=\frac{X}{|X|}$ defines a section of the unit tangent bundle $M\rightarrow T^{1}M$ over $M\setminus \Sigma$.
Since $T^{1}M$ is the associated bundle to the bundle of orthonormal frames $M\rightarrow SO(M)$ with respect of standard action of $SO(2)$ on $\mathbb{S}^{1}$, then $SO(M)|_{M\setminus \Sigma}$ reduces to the trivial subgroup $G=\{e \}$ of $\bar{G}=SO(2)$.
Now, $\bar{G}/G=SO(2)$, and $SO(2)$ is diffeomorphic to $\mathbb{S}^{1}$, then $H^{1}(\bar{G}/G)$ is $1$-dimensional vector space generated by the angle form on $\mathbb{S}^{1}$.
Therefore, we can choose this generator to construct the form $\alpha\in \Omega^{1}(SO(M))$, and the connection form on $SO(M)$ induced by the Levi-Civita connection.
This connection form is given by equation~\eqref{eq:42}.
Thus, if $S$ and $q: S \rightarrow T^{1}M$ are the manifold and the map, respectively, constructed in section~\ref{sec:4}, the equation~\eqref{eq:86} reduces to the equation~\eqref{eq:46}.  
Hence, Hopf-Gauss-Bonnet theorem in principal $G$-bundle with singularities is a generalization of the classical Gauss-Bonnet theorem.
\label{rem:10}
\end{rem}

\subsection{Hopf-Gauss-Bonnet theorem for the case of the sum of Whitney of vector bundles}
\label{subsec:5_3}
Let $E_1, E_2, \cdots, E_k$ be $k$ vector bundles of rank $2$ over a compact connected Riemannian two-dimensional manifold $M$, and for each $i$, let $s^0_i: M \rightarrow E_i$ be the zero section of $E_i$.
Let $\tilde{E_i} = E_i \setminus s^0_i(M)$.

For all $i$ let us take sections $s_i : M \rightarrow E_i$.
Then $s_i$ is a section of $\tilde{E_i}$ with singularities, whose set of singularities is the set $\Sigma_i$ of zeros of $s_i$.
Thus, if we consider the product bundle $\tilde{E}=\tilde{E}_1\times \tilde{E}_2 \times \cdots \times \tilde{E}_k$ over $M\setminus \Sigma$, where $\Sigma=\cup \Sigma_i$, then the map $s=(s_1, \cdots,s_k) : M\setminus \Sigma \rightarrow \tilde{E}$ is a section with singularities of $\tilde{E}$.
In addition, suppose that the section $s : M\setminus \Sigma \rightarrow \tilde{E}$ admits a resolution of singularities (see section~\ref{sec:1}).
This means that there exists a $2$-dimensional oriented compact manifold $S$ with boundary $ \partial S =\bigcup_{j= \overline {1, k}} S_{j}$, and a map $q: S \rightarrow \tilde{E}$ such that $q$ restricted to $\overset{\circ}{S} = S \setminus \partial S$ is a diffeomorphism between $S \setminus \partial S$ and $s(M \setminus \Sigma)$.

Let us take connection forms $\omega_i: T\tilde{E}_i \rightarrow V\tilde{E}_i$ on $\tilde{E}_i$, for $i=1, \cdots, k$, then $\omega=(\omega_1, \cdots, \omega_k)$ is a connection form on $E$.
The curvature form of $\omega$ is $\Omega=(\Omega_1, \cdots, \Omega_k)$, where $\Omega_i$ is the curvature form of $\omega_i$.

Since the fiber of the bundle $\tilde{E}_i$ is $F_i=\mathbb{R}^{2}\setminus \{ 0 \}$, and the cohomology class $a_i=[d\phi_i]$ represented by the angular form on $F_i$ is a generator of $H^{1}(F_i)$, then the $1$-form $(d\phi_1, \cdots, d\phi_k)$ represents a nonzero cohomology class $a \in H^{1}(F)$, where $F=F_1\times \cdots \times F_k$ is the fiber of the bundle $E$.
Note that $F$ retracts by deformation to the $k$-torus $\mathbb{T}^k$.
With this in mind, we can choose the $1$-form $\alpha\in \Omega^{1}(E)$ satisfying properties (1) and (2) of Statement~\ref{stat:1} as follows:
\begin{equation}
\alpha = \omega_1 + \cdots + \omega_k,
\label{eq:87}
\end{equation}
where $\omega_i = d\varphi_i + \pi^* G_i$ is the connection form of $\tilde{E_i}$ (compare with~\eqref{eq:42}).

Then $d\alpha = \pi^* dG$, where $G = \sum_{i=1}^k G_i$, and by applying Theorem~\ref{thm:2} we obtain that
\begin{equation}
\int_S q^{*}\pi^{*}dG=\sum_{x_i \in \Sigma} index_{x_i}(s,a).
\label{eq:88}
\end{equation}

Let us fix an area form $\theta$ on $M$.
Let $dG_i = K_i \theta$, and call the function $K_i$ the \emph{curvature of the connection} $\omega_i$.

If we consider the points of $\Sigma \setminus \Sigma_i$ as singular points of the section $s_i$, then it is a section with singularities of the bundle $\tilde{E}_i \rightarrow M$.
In this sense we have the following statement.
\begin{thm}
Let $M$ be a compact connected two-dimensional manifold, let $E_i\rightarrow M, \ i=1, \cdots, k$, are $k$ vector bundles of rank $2$ over $M$, let $\tilde{E}_i =E_i\setminus s^0_i(M)$, where $s^0_i: M\rightarrow E_i$ is the zero section of $E_i$, and let $s_i : M \rightarrow E_i$ be a section of $E_i$ for each $i$.
If $\omega_1, \cdots, \omega_k$ are connections on $E_1, \cdots, E_k$ respectively, $K_1, \cdots, K_k$ are the curvatures of these connections, and $s = (s_1, \cdots, s_k) : M\setminus \Sigma \rightarrow \tilde{E}$, then
\begin{equation}
\sum_{j=\overline{1, k}}\int_M K_i \theta = \sum_{x_i \in \Sigma} \sum_{j=\overline{1, k}}ind_{x_i}(s_j, [d\phi_j]),
\label{eq:89}
\end{equation}
where $d\phi_i\in \Omega^{1}(\mathbb{R}^{2})$ is the angular form of $\mathbb{R}^{2}$, and $\theta$ is the area form on $M$.
\label{thm:5}
\end{thm}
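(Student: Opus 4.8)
The plan is to deduce~\eqref{eq:89} by evaluating separately the two sides of the equality~\eqref{eq:88}, which the discussion preceding the statement records for the section $s=(s_1,\dots,s_k)$ once one has chosen $a=([d\phi_1],\dots,[d\phi_k])\in H^1(F)$, $\alpha=\omega_1+\cdots+\omega_k$ as in~\eqref{eq:87}, and $G=\sum_i G_i$. First I would confirm the ingredients entering~\eqref{eq:88}: the form $\alpha$ satisfies (1) and (2) of Statement~\ref{stat:1} with respect to $a$, since it vanishes on the horizontal distribution (each $\omega_i$ does) and restricts on every fibre to the sum of the angular forms of the factors; moreover $d\alpha=\pi^*dG$ is pulled back from $M$, hence purely of type $(2,0)$, so in Statement~\ref{stat:5} one has $\theta_{(1,1)}=0$ and $\theta_{(2,0)}=\pi^*dG$, and Theorem~\ref{thm:2} yields exactly~\eqref{eq:88}. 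It then remains to prove (i) $\int_S q^*\pi^*dG=\sum_{i=1}^k\int_M K_i\theta$, and (ii) $\sum_{x\in\Sigma}ind_x(s;a)=\sum_{x\in\Sigma}\sum_{i=1}^k ind_x(s_i;[d\phi_i])$.

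For (i) I would repeat the argument of~\eqref{eq:44}--\eqref{eq:46}. Since $E_i$ is an oriented rank-$2$ bundle, the $2$-form $dG_i=K_i\theta$ does not depend on the local orthonormal frame used to write $G_i$, so $dG=\sum_i dG_i=\sum_i K_i\theta$ is a smooth, globally defined $2$-form on $M$. By Theorem~\ref{thm:1}~a) the map $\pi\circ q$ restricts to an orientation-preserving diffeomorphism $S\setminus\partial S\to M\setminus\Sigma$, while $\partial S$ and $\Sigma$ are sets of measure zero; hence $\int_S q^*\pi^*dG=\int_S(\pi\circ q)^*dG=\int_{M}dG=\sum_{i=1}^k\int_M K_i\theta$, which is the left-hand side of~\eqref{eq:89}.

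For (ii) I would exploit the product structure of the typical fibre $F=F_1\times\cdots\times F_k$, where $F_i=\mathbb{R}^2\setminus\{0\}\simeq\mathbb{S}^1$. By the K\"unneth theorem $H^1(F)\cong\bigoplus_i H^1(F_i)$, with $a$ corresponding to $([d\phi_1],\dots,[d\phi_k])$; dually $\pi_1(F)\cong\prod_i\pi_1(F_i)$ via the projections $p_i\colon\tilde{E}\to\tilde{E}_i$ onto the factors. For $x\in\Sigma$ let $\gamma_x\colon\mathbb{S}^1\to F_x$ be the loop of Definition~\ref{df:1}; its $i$-th component $p_i\circ\gamma_x\colon\mathbb{S}^1\to(F_i)_x$ is freely homotopic to the loop arising from a resolution of $s_i$ near $x$, hence represents $ind_x(s_i)$ in the sense of Definition~\ref{df:1}, which is the classical index of $s_i$ at $x$ and vanishes when $x\notin\Sigma_i$ (so $s_i$ is regular at $x$), exactly as in subsection~\ref{subsec:4_1}. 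Therefore $ind_x(s)=(ind_x(s_1),\dots,ind_x(s_k))$, and, writing $\alpha=\sum_i p_i^*\omega_i$ with each $\omega_i$ restricting to the angular form on the fibres of $\tilde{E}_i$,
\[
ind_x(s;a)=\int_{\gamma_x}\alpha=\sum_{i=1}^k\int_{p_i\circ\gamma_x}\omega_i=\sum_{i=1}^k ind_x(s_i;[d\phi_i]).
\]
Summing over $x\in\Sigma$ and combining with (i) gives~\eqref{eq:89}.

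The routine steps are Stokes' theorem and the observation that each curvature form $K_i\theta$ is globally defined. The step I expect to be the main obstacle is the product decomposition of the index in (ii): one must check carefully that the projection $p_i$ transports the resolution data $(S,q,u_i)$ of $s$ to data computing $ind_x(s_i;[d\phi_i])$, i.e. that $p_i\circ(q\circ u_i)|_{\mathbb{S}^1\times\{1\}}$ is freely homotopic in $(F_i)_x$ to the boundary loop of a resolution of $s_i$ near $x$, and that this homotopy class is the classical index of $s_i$ at $x$. Since $p_i$ need not be injective on $s(M\setminus\Sigma)$, this has to be argued on a punctured-disk neighbourhood of each $x$ rather than via a global diffeomorphism; once that is in place, the rest is an assembly of Theorem~\ref{thm:2}, Statement~\ref{stat:5}, and the computations of subsections~\ref{subsec:4_1} and~\ref{subsec:5_3}.
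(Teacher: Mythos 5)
Your proposal is correct and follows essentially the same route as the paper: apply Theorem~\ref{thm:2} in the form of~\eqref{eq:88} to the product bundle, split the index additively using $a=[d\phi_1]+\cdots+[d\phi_k]$, and identify $\int_S q^*\pi^*dG$ with $\sum_i\int_M K_i\theta$. The one step you flag as delicate --- transporting the resolution to each factor --- is handled in the paper by observing (diagram~\eqref{eq:90}) that $q_i=\tilde p_i\circ q$ is itself a resolution of the singularities of $s_i$, because $\tilde p_i$ restricted to $s(M\setminus\Sigma)$ \emph{is} injective (both $s(M\setminus\Sigma)$ and $s_i(M\setminus\Sigma)$ are graphs over $M\setminus\Sigma$), so the non-injectivity worry does not actually arise.
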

\begin{proof}
Let $S$ be an oriented $2$-manifold with boundary, and let $q: S \rightarrow E$ a map which satisfies the condition given in section~\ref{sec:1}, that is, $q_i$ restricted to $\overset{\circ}{S} = S \setminus \partial S$ is a diffeomorphism between $S \setminus \partial S$ and $s_i(M \setminus \Sigma)$.
Now, because $E=\tilde{E}_1 \times \cdots \tilde{E}_k$, $q$ has the form $q=(q_1, \cdots, q_k)$, and for each $i$, the map $q_i: S\rightarrow \tilde{E}_i$ also satisfies this condition.
This fact follows from the commutative diagram
\begin{equation}
\xymatrix{
& M\setminus \Sigma \ar[r]^{id} \ar[d]_{s} & M\setminus \Sigma \ar[d]^{s_i}\\
\overset{\circ}{S}\ar[r]^{q} & s(M\setminus \Sigma)\ar[r]^{\tilde{p}_i} & s_i(M\setminus \Sigma)
}
\label{eq:90}
\end{equation}
Since the vertical arrows are diffeomorphisms, we have that $q_i$ is a diffeomorphism.
Therefore, for each $i = 1,\cdots, k$, the map $q_i=\tilde{p}_i\circ q$ is also a diffeomorphism.

Let us take the cohomology class $a=[(d\phi_1, \cdots, d\phi_k)]$ of the fiber of $\tilde{E}$, where $d\phi_i$ is the angular form on the fiber $F_i$ of $E_i$, then $a=[d\phi_1]+ \cdots +[d\phi_k]$.
Thus, the expression at the right side of equation~\eqref{eq:86} can be written as follows
\begin{equation}
\sum_{x_i \in \Sigma} index_{x_i}(s,a)=\sum_{x_i \in \Sigma} \sum_{j=\overline{1, k}}ind_{x_i}(s_j, [d\phi_j]).
\label{eq:91}
\end{equation}
Furthermore, $d\overset{i}{G}=K_i \theta$, and $d\alpha =\sum_{j=\overline{1, k}} \pi^{*}(K_i \theta)$.
Therefore, the expression at the left side of equation~\eqref{eq:86} reduces to the following equality
\begin{equation}
\int_S q^{*}\pi^{*}dG=\sum_{j=\overline{1, k}}\int_M K_i \theta
\label{eq:92}
\end{equation}
Hence, we obtain the following equation
\begin{equation}
\sum_{i=\overline{1, k}}\int_M K_i \theta = \sum_{x_i \in \Sigma} \sum_{j=\overline{1, k}}ind_{x_i}(s_j, [d\phi_j]).
\label{eq:93}
\end{equation}
\end{proof}

\begin{rem}
Note that if $\Sigma_i \cap \Sigma_j$ is non empty, and $x\in \Sigma_i \cap \Sigma_j$, then the indexes $ind_x(s_i)$ and $ind_x(s_j)$ could be different, still in the case where the vector bundles $E_i$ and $E_j$ are the same.
\label{rem:11}
\end{rem}

\end{document}